\newcommand{\T}{^{\mathsf{T}}}
\title{Symplectic Runge-Kutta schemes for adjoint equations, automatic differentiation, optimal control and more \thanks{This research is supported by
projects MTM2010-18246-C03-01 and MTM2013-46553-C3-1-P from Mi\-nisterio de Ciencia e
Innovaci\'on, Spain.}}
\author{
J.M.\ Sanz-Serna\thanks{Departamento de Matem\'aticas, Universidad Carlos III de Madrid, Avenida de la Universidad 30, 28911 Legan\'es (Madrid), Spain.
\email{jmsanzserna@gmail.com}}
}
\begin{document}
\maketitle
\slugger{sirev}{xxxx}{xx}{x}{x--x}

\begin{abstract} The study of the sensitivity of the solution of a system of differential equations with respect to changes in the initial conditions leads to the introduction of an adjoint system, whose discretisation is related to reverse accumulation in automatic differentiation. Similar adjoint systems arise in optimal control and other areas, including classical Mechanics. Adjoint systems are introduced in such a way that they exactly preserve a relevant {\em quadratic invariant} (more precisely an inner product). Symplectic Runge-Kutta and Partitioned Runge-Kutta methods are defined through the exact conservation of a differential geometric structure, but may be characterized by the fact that they  preserve exactly {\em quadratic invariants}  of the system being integrated. Therefore the symplecticness (or lack of symplecticness) of a Runge-Kutta or Partitioned Runge-Kutta integrator should be relevant to understand its performance when applied to the computation of sensitivities, to optimal control problems and in other applications requiring the use of adjoint systems. This paper examines the links between symplectic integration and those applications.
The article presents in a new, unified way a number of results now scattered or implicit in the literature. In particular we show how some common procedures, such as the direct method in optimal control theory and the computation of sensitivities via reverse accumulation, imply, probably unbeknownst to the user, \lq hidden\rq\ integrations with symplectic Partitioned Runge-Kutta schemes.
\end{abstract}

 \begin{keywords}Runge-Kutta methods, Partitioned Runge-Kutta methods, symplectic integration, Hamiltonian systems, variational equations, adjoint equations, computation of sensitivities, Lagrange multipliers, automatic differentiation, optimal control, Lagrangian mechanics, reflected and transposed Runge-Kutta schemes, differential-alg\-ebraic problems, constrained controls\end{keywords}

 \begin{AMS}34H05, 49A10, 65L06, 65K10, 65P10, 70H25\end{AMS}

\pagestyle{myheadings}
\thispagestyle{plain}
\markboth{J.~M. SANZ-SERNA}{SYMPLECTIC RK SCHEMES FOR ADJOINTS, CONTROL AND MORE}

\section{Introduction}

Symplectic Runge-Kutta (RK) \cite{lasagni}, \cite{bit}, \cite{suris} and Partitioned Runge-Kutta (PRK) \cite{abia2}, \cite{suris2} formulae were introduced to integrate Hamiltonian systems in long time intervals. They are defined in terms of a purely geometric property, the conservation of the symplectic structure, and provided the first widely studied instance of what  was later termed {\em geometric integration} \cite{geom}. It is well known that symplectic RK methods may be characterized as being those that exactly preserve all {\em quadratic} first integrals (invariants of motion) of the system being integrated.  This is a useful property: for instance the (symplectic) implicit midpoint rule is sometimes chosen to integrate wave equations because it conserves quadratic invariants.
However quadratic conservation has taken a back seat to the symplectic property itself in the geometric integration literature.
The aim of this paper is to emphasize that the conservation of quadratic invariants plays an important role in the computation of numerical sensitivities, in optimal control theory and in classical mechanics. In all these areas there is an
interplay between variational equations and their adjoints, an interplay  based {\em on the conservation of a key quadratic invariant} (see (\ref{eq:prop})). The conservation of this invariant gives relevance to the symplecticness of the integrator.
Actually, some widely used procedures, such as the direct method in optimal control theory and the computation of sensitivities via reverse accumulation, imply \lq hidden\rq\ integrations with symplectic PRK schemes; therefore the theory of symplectic PRK integration should be helpful in understanding such procedures. From a more abstract point of view one may say that the purpose of this article is to clarify the behaviour of RK  integrators {\em vis-\`{a}-vis} the operation of taking {\em adjoints}: an RK  method is symplectic precisely if it commutes with the formation of adjoints.

The paper presents a coherent treatment of results spread across the
literature of various communities together with some new, unifying results.
In order to cater for a variety of possible readers, this article is written without assuming much background. We hope it will help researchers in optimal control to better understand RK schemes and, similarly,  encourage RK experts to consider sensitivities and optimal control problems.

Section \ref{sec:rk} provides background on numerical integrators. We introduce the necessary notation and recall a number of properties of symplectic RK and related schemes. In particular, we quote some results (Theorems \ref{th:cooper}, \ref{th:cooper2}) that ensure the exact preservation by the integrator of quadratic conservation laws.

Section \ref{s:adjoint}, the core of the paper, is devoted to the integration of the adjoint variational equations used to perform sensitivity analysis. It is well known that an RK method $\cal M$ applied to  the variational equations of a  system $\cal S$ automatically produces the variational equations for the  discretisation of $\cal S$ by means of $\cal M$ (Theorem \ref{th:variational}); in other words, the operation of RK discretisation {\em commutes} with the operation of forming variational equations.
The situation for the adjoints is more complicated, cf.\ \cite{sirkes}, because commutation will only take place if the discretisation is carried out so as to {\em exactly conserve the key quadratic invariant} (\ref{eq:prop}) and, in some way, this demands a symplectic integrator. There are three cases of increasing complexity:
\begin{itemize}
\item $\cal S$ is integrated with a {\em symplectic} RK scheme $\cal M$. Then the application of $\cal M$ to the adjoint equations of $\cal S$ produces the adjoint equations for  discretisation of $\cal S$ by means of $\cal M$ (Theorem \ref{th:adjoint}).
\item $\cal S$ is integrated with a {\em non-symplectic} RK scheme $\cal M$ whose weights do not vanish. Then,  the adjoint equations for the discretisation are obtained by integrating the adjoint equations of $\cal S$ with a {\em different} set of RK coefficients, so that the overall procedure is a symplectic PRK method (Theorem \ref{th:adjointPRK}). The recipe for the adjoint coefficients is given in formula (\ref{eq:trick}) below. The  method used for the adjoint equations will in general be of lower order than the RK scheme $\cal M$ used for the main integration and  will also have different stability properties.  For these reasons non-symplectic methods $\cal M$ should be used with care. The computation of sensitivities of the discrete solution via {\em automatic differentiation with reverse accumulation} implicitly provides the {\em symplectic PRK integration} of the adjoint equations with coefficients (\ref{eq:trick}) (Theorem \ref{th:automatica}).
\item $\cal S$ is integrated with a {\em non-symplectic} RK scheme $\cal M$ having one or more null weights. Then, to obtain the adjoint equations of the discretisation, the continuous adjoint equations have to be integrated with a fancy integrator outside the RK class (see the appendix). Again an order reduction is likely to take place and again the fancy integration is implicitly performed whenever differentiation with reverse accumulation is used.
\end{itemize}

Section \ref{sec:control} deals with the Mayer optimal control problem in the case of unconstrained controls. There is again a quadratic conservation law that is of crucial importance and this fact brings symplectic schemes to the foreground.
The results there are quite similar to those in the preceding section
(the case of vanishing weights is discussed in the appendix):
\begin{itemize}
\item For a symplectic RK method, {\em commutation} \cite{ross} takes place : the discretisation of the continuous first order  conditions necessary for optimality provides the  first order necessary conditions for the discrete solution (Theorem \ref{th:main2}).
\item When the equations for the states are discretised with a {\em non-symplectic} RK scheme {\em with non-vanishing weights}, to achieve commutation  the costate equations have to be integrated by means of a clever set of coefficients that does not coincide with the set used for the states (Theorem \ref{th:main2}).  With this clever set, the overall integration (states+costates) is performed with a symplectic PRK method. In general, an order reduction will take place for states, costates and controls. As first noted by Hager \cite{hager}, the required set of coefficients is alternatively defined, not by imposing symplecticness of the integration, but by using the {\em direct} approach, i.e. by minimising the cost in the discrete realm with the help of Lagrange multipliers (Theorem \ref{th:direct}).
\end{itemize}

For a {\em symplectic}  RK or PRK integration of the system for states and costates, the direct and indirect approach are mathematically equivalent. When a  non-symplectic PRK is used in the indirect approach, the discrete solution {\em cannot} be reached via the direct approach, which always implies a symplectic integration of the states+costates system.

Extensions to more general control problems are presented in Section \ref{sec:exten}. Section \ref{s:mech} is devoted to classical mechanics. Hamilton's variational principle may of course be viewed as an optimal control problem: it is a matter of minimising a functional subject to differential constraints. As is well known, the application of the theory of optimal control to this situation replicates the standard procedure to obtain Hamilton's canonical equations from Hamilton's principle. In the discrete realm, this process provides the variational derivation of symplectic PRK integrators, originally due to Suris \cite{suris2}.

Section~\ref{s:scherer} relates the preceding material to the notions of reflection and transposition of RK coefficients introduced by Scherer and T\"{u}rke  \cite{scherer} and Section~\ref{s:conc} concludes.

There is an appendix that deals with the problem of how to \lq supplement\rq\ a given non-symplectic RK method with some vanishing weights so as to have a symplectic algorithm for partitioned systems.

In order not to clutter the exposition with unwanted details, I shall not be concerned  with technical issues such as existence of solutions of implicit integrators, smoothness requirements and so on. These   may be very important in some circumstances (e.g. lack of smoothness poses difficulties  if the controls are constrained, see  \cite{hager2}).

To keep the length of this work within reasonable limits I shall not discuss some other interesting connections. The duality between the Fokker-Planck equations and the Kolmogorov Backward equations in the theory of Markov stochastic processes  \cite{feller} provides another instance of the occurrence of adjoints; the material in this paper may be easily extended to study that situation. The paper \cite{frank} shows how the symplecticness of the integrator may be used to ensure symmetry-preserving simulations of the matrix Riccati equation in the feed-back representation of linear/quadratic optimal control problems.

\section{Numerical integrators}
\label{sec:rk}
In this section we  review some results on RK and related methods. For more details the reader is referred to \cite{ssc}, \cite{butcher}, \cite{hlw}, \cite{hnw}, \cite{hw}.

\subsection{Runge-Kutta schemes}
An RK method with $s$ stages is specified by $s^2+2s$  numbers
\begin{equation}\label{eq:rkabc}
a_{ij},\quad i,j = 1,\dots,s, \qquad b_i,\: c_i,\quad i=1,\dots, s.
\end{equation}
Given a $D$-dimensional differential system, $F:\mathbb{R}^D\times \mathbb{R} \rightarrow \mathbb{R}^D$,
\begin{equation}\label{eq:ode}
\frac{d}{dt} y = F(y,t),
\end{equation}
to be studied in an interval, $t_0\leq t\leq t_0+T$, and an initial condition
\begin{equation}\label{eq:ic}
\qquad y(t_0) = A\in\mathbb{R}^D,
\end{equation}
 the method (\ref{eq:rkabc}) finds approximations $y_n$ to
the values $y(t_n)$, $n = 0,1,\dots, N$, of the solution of  (\ref{eq:ode})--(\ref{eq:ic}), $ t_0 < t_1 < \cdots < t_N=t_0+T$, by setting
$y_0= A$ and, recursively,
\begin{equation}\label{eq:rkstep}
y_{n+1} = y_n +h_n \sum_{i=1}^s b_i K_{n,i}, \qquad n= 0,1\dots, N-1.
\end{equation}
Here $h_n=t_{n+1}-t_n$ denotes the step-length  and   $K_{n,i}$, $i=1,\dots,s$, are the \lq slopes\rq\
\begin{equation}\label{eq:new}
K_{n,i} = F(Y_{n,i}, t_n+c_ih_n)
\end{equation}
at the so-called internal stages $Y_{n,i}$.
The vectors $Y_{n,1}$,\dots, $Y_{n,s}$ are in turn defined by the relations
\begin{equation}\label{eq:rkstages}
Y_{n,i} = y_n + h_n \sum_{j=1}^s a_{ij} K_{n,j}, \quad i=1,\dots, s.
\end{equation}
In the particular case where the matrix $(a_{ij})$ is, perhaps after renumbering the stages, strictly lower triangular (explicit RK methods), the stages are computed recursively from (\ref{eq:new})--(\ref{eq:rkstages}).
 In the  general case,  (\ref{eq:new})--(\ref{eq:rkstages}) provides, for each $n$, a system of coupled equations to be solved for the stages.

The internal stages should not  be confused with the values $y_n$ output by the integrator and may merely be regarded as auxiliary variables. Alternatively, the vector $Y_{n,i}$ is sometimes viewed as an approximation to the off-step value $y(t_n+c_ih_n)$. It is important to emphasise that the differences $y(t_n+c_ih_n)-Y_{n,i}$ are typically much larger than the differences $y(t_n)-y_n$.

When the system (\ref{eq:ode}) is autonomous, i.e.\ $F = F(y)$, the $c_i$ play no role. At the other end of the spectrum, if $F$ is independent of $y$, the RK discretisation amounts to the use in the interval $t_0\leq t\leq t_0+T$ of the composite quadrature rule based on the {\em abscissas} $c_i$ and the {\em weights} $b_i$.

An RK scheme is said to possess order $\rho$ if, for  $t_0\leq t_n \leq t_0+T$ and smooth problems, $|y_n-y(t_n)| = \mathcal{O}(h^\rho)$, where $h = \max_n h_n$. The expansion of the local truncation error in powers of the step-length $h_n$ includes, for each power $h_n^k$, $k= 1, 2,\dots$, one or several elementary differentials of $F$; an integrator has order $\geq \rho$ if and only if, in that expansion, the coefficients of the elementary differentials of orders $k =1,\dots,\rho$ vanish. For instance, the relations (order conditions)
\begin{equation}\label{eq:ordcond1}
\sum_{i=1}^s b_i=1,\quad \sum_{i,j=1}^s b_ia_{ij}=\frac{1}{2},\quad \sum_{i,j,k=1}^s b_ia_{ij}a_{jk}=\frac{1}{6},\quad \sum_{i,j,k=1}^s b_ia_{ij}a_{ik}=\frac{1}{3},\
\end{equation}
ensure order at least $3$ for autonomous problems. They correspond to the elementary differentials $F$ (of order 1), $(\partial_y F) F$ (of order 2) and $(\partial_y F)(\partial_y F) F$,
$(\partial_{yy} F)[ F, F] $ (both of order 3)  ($\partial_y F$ is the Jacobian matrix and $\partial_{yy} F$ the tensor of second derivatives).
Since the work of Butcher in the early 1960's, order conditions and elementary differentials are studied with the help of graphs. To impose order $\geq \rho$ for autonomous problems, there is an independent order condition for each rooted tree with $\rho$ or fewer vertices. Most, but not all, useful RK schemes satisfy $c_i =\sum_j a_{ij}$ for each $i$; for them order $\rho$ for autonomous problems implies order $\rho$ for all problems.

In general RK methods do not conserve exactly the quadratic first integrals of the system being integrated. The simplest illustration is afforded by the familiar Euler's rule ($s=1$, $b_1 = 1$, $a_{11} = 0$, $c_1 = 0$) applied to the harmonic oscillator ($D=2$) $$\frac{d}{dt} y^1 = -y^2,\quad \frac{d}{dt} y^2 = y^1$$ (superscripts denote components). The (quadratic) energy $I = (1/2)((y^1)^2+
(y^1)^2)$ is conserved by the differential system because $$\frac{d}{dt} I = y^1 \frac{d}{dt}y^1+y^2
\frac{d}{dt}y^2 = y^1(-y^2)+y^2 y^1 =0.$$
However for Euler's rule it is trivial to check that, over one step,
$$
I(y^1_{n+1},y^2_{n+1})-I(y^1_{n},y^2_{n}) = \frac{h_n}{2} \big((y_n^1)^2+(y_n^2)^2\big),
$$
with an energy increase. This lack of exact preservation takes place for all explicit RK integrators, even when their order $\rho$ is high. On the other hand, it is well known and easy to prove that for the implicit midpoint rule ($s=1$, $b_1 = 1$, $a_{11} = 1/2$, $c_1 = 1/2$) and the harmonic oscillator $I(y^1_{n+1},y^2_{n+1})=I(y^1_{n},y^2_{n})$.

The present paper is based on the following 1987 result of Cooper \cite{cooper}. It ensures that {\em some} RK methods automatically inherit each quadratic conservation law possessed by the system being integrated.

 \begin{theorem}\label{th:cooper}Assume that the system (\ref{eq:ode}) possesses a quadratic first integral
 $I$, i.e.\ $I(\cdot,\cdot)$ is a real-valued bilinear mapping in $\mathbb{R}^D\times \mathbb{R}^D$ such that, for each $A$ and $t_0$, the solution $y(t)$ of (\ref{eq:ode})--(\ref{eq:ic}) satisfies
$(d/dt)I(y(t),y(t))\equiv 0$. The relations
 \begin{equation}\label{eq:sympcond}
  b_ia_{ij}+b_ja_{ji} -b_ib_j=0, \qquad i,j = 1,\dots, s,
 \end{equation}
 guarantee that, for each RK trajectory $\{y_n\}$ satisfying (\ref{eq:rkstep})--(\ref{eq:rkstages}), $I(y_{n},y_{n})$ is independent of $n$.
 \end{theorem}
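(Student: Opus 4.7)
The plan is a direct algebraic computation that exploits (i) the bilinearity of $I$, (ii) the fact that quadratic conservation on the continuous side gives a stage-wise identity, and (iii) the coefficient condition (\ref{eq:sympcond}) to cancel the only surviving quadratic remainder.

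First I would translate the continuous conservation law into an infinitesimal form. Since $I$ is bilinear, symmetric in the sense of $I(u,v)+I(v,u)$, and satisfies $(d/dt)I(y(t),y(t))\equiv 0$ along every trajectory of (\ref{eq:ode}), one gets the pointwise identity
\begin{equation*}
I(F(y,t),y) + I(y,F(y,t)) = 0 \qquad \text{for all } y\in\mathbb{R}^D,\ t.
\end{equation*}
Applying this at each internal stage $Y_{n,i}$ with $t=t_n+c_ih_n$ yields the key stage identity
\begin{equation*}
I(K_{n,i},Y_{n,i}) + I(Y_{n,i},K_{n,i}) = 0, \qquad i=1,\dots,s.
\end{equation*}

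Next I would expand $I(y_{n+1},y_{n+1})$ using (\ref{eq:rkstep}) and bilinearity:
\begin{equation*}
I(y_{n+1},y_{n+1}) = I(y_n,y_n) + h_n\sum_{i=1}^s b_i\bigl[I(K_{n,i},y_n)+I(y_n,K_{n,i})\bigr] + h_n^2\sum_{i,j=1}^s b_ib_j\, I(K_{n,i},K_{n,j}).
\end{equation*}
To eliminate the mixed terms $I(K_{n,i},y_n)$ and $I(y_n,K_{n,i})$ I would use (\ref{eq:rkstages}) in the equivalent form $y_n=Y_{n,i}-h_n\sum_j a_{ij}K_{n,j}$ and substitute, applying bilinearity once more. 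Combining with the stage identity displayed above gives
\begin{equation*}
I(K_{n,i},y_n)+I(y_n,K_{n,i}) = -\,h_n\sum_{j=1}^s a_{ij}\bigl[I(K_{n,i},K_{n,j})+I(K_{n,j},K_{n,i})\bigr].
\end{equation*}

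Finally, I would multiply by $b_i$, sum on $i$, and symmetrise the double sum by relabelling $i\leftrightarrow j$ in the second contribution. The linear term becomes $-h_n\sum_{i,j}(b_ia_{ij}+b_ja_{ji})\,I(K_{n,i},K_{n,j})$, so overall
\begin{equation*}
I(y_{n+1},y_{n+1})-I(y_n,y_n) = -\,h_n^{2}\sum_{i,j=1}^s\bigl(b_ia_{ij}+b_ja_{ji}-b_ib_j\bigr)\,I(K_{n,i},K_{n,j}),
\end{equation*}
and the hypothesis (\ref{eq:sympcond}) makes every coefficient vanish, whence $I(y_{n+1},y_{n+1})=I(y_n,y_n)$ and by induction $I(y_n,y_n)$ is independent of $n$.

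The argument is entirely bookkeeping; the only subtle point is noticing that one must rewrite $y_n$ via (\ref{eq:rkstages}) in order to introduce the $a_{ij}$'s (and hence make (\ref{eq:sympcond}) visible) while simultaneously producing the $Y_{n,i}$'s needed to invoke the continuous conservation law. No symmetry of $I$ in its two arguments is required; only bilinearity is used, which is why the result also applies to general bilinear invariants such as inner products between solutions of two different systems.
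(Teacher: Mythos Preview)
Your proof is correct and follows essentially the same route as the paper. The paper does not actually write out the proof of Theorem~\ref{th:cooper} but says it parallels that of Theorem~\ref{th:cooper2}, which rests on Lemma~\ref{lemma}: expand $S(q_{n+1},p_{n+1})-S(q_n,p_n)$ by bilinearity, replace $q_n,p_n$ via the stage equations, and let the coefficient condition kill the quadratic remainder, leaving only terms of the form $S(k_{n,i},P_{n,i})+S(Q_{n,i},\ell_{n,i})$, which vanish by the continuous conservation law. Your argument is the RK specialisation of this, with the only organisational difference that you invoke the stage identity $I(K_{n,i},Y_{n,i})+I(Y_{n,i},K_{n,i})=0$ before collecting the $K$--$K$ terms, whereas the paper isolates the purely algebraic identity first (Lemma~\ref{lemma}) and applies conservation afterwards; the computations are otherwise identical.
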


We shall not reproduce here the proof of this result; it is similar to that of Theorem \ref{th:cooper2} below. The relations (\ref{eq:sympcond}) are essentially necessary for an RK scheme to conserve {\em each} quadratic first integral of  {\em each} differential system \cite[Chapter VI, Theorems 7.6, 7.10]{hlw}.

In many applications the system (\ref{eq:ode}) is Hamiltonian. This means that $D$ is even and, after writing $y=[q\T,p\T]\T$, $F=[f\T,g\T]\T$, with $q,p,f,g\in\mathbb{R}^d$, $d=D/2$, there exists a real-valued function $H(p,q,t)$ (the Hamiltonian) such that $f^r = \partial H/\partial p^r$, $g^r = -\partial H/\partial q^r$, $r = 1,\dots, d$ (superscripts indicate components). Hamiltonian systems are characterised geometrically by the symplectic property of the corresponding solution flow \cite{arnold}. When $d=1$, symplecticness means  conservation of oriented area; in higher dimensions a similar but more complicated interpretation, based on differential forms, exists; such interpretation is not required to read this paper. It is often advisable \cite{ssc}, \cite{hlw}, \cite{benbook} to integrate Hamiltonian problems by means of so-called symplectic algorithms, i.e.\ algorithms such that the transformation $y_n \mapsto y_{n+1}$ in $\mathbb{R}^{2d}$ is symplectic; those algorithms are particularly advisable in integrations where the interval $t_0\leq t \leq t_0+T$ is long (for a recent reference in that connection, see \cite{laskar}, which is part of a project to integrate the solar system over a 60 million year interval). Using the method of modified equations
\cite{griffiths}, each numerical solution may (approximately) be interpreted as a true solution of a nearby differential system called the modified system. For symplectic methods applied to Hamiltonian systems, the modified system is Hamiltonian; for non-symplectic discretisations, the modified system, while perhaps close to the system being integrated, is not Hamiltonian and this fact is likely to imply a substantial distortion of the long-time dynamics \cite{ssc}, \cite{hlw}.

The first symplectic integrators were constructed in an {\em ad hoc} way; it was later discovered (independently by Lasagni \cite{lasagni}, Suris \cite{suris} and the present author \cite{bit})  that the class of RK methods contains many symplectic schemes:
\begin{theorem}\label{th:lsss}
Assume that the system (\ref{eq:ode}) is Hamiltonian. The relations (\ref{eq:sympcond}) guarantee that the mapping $y_n \mapsto y_{n+1}$  defined in (\ref{eq:rkstep})--(\ref{eq:rkstages}) is symplectic.
\end{theorem}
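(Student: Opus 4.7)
The plan is to reduce the symplecticness of the one-step map $y_n\mapsto y_{n+1}$ to an application of Theorem~\ref{th:cooper} to the variational equation of the Hamiltonian system. Let $J$ denote the standard $2d\times 2d$ symplectic matrix; the map is symplectic precisely when its Jacobian $M_n=\partial y_{n+1}/\partial y_n$ satisfies $M_n\T J M_n = J$. By polarisation, this matrix identity is equivalent to the preservation, for every $\Psi\in\mathbb{R}^{2d}$, of the scalar form $\Psi\T J\Psi$ under the linear map $\Psi\mapsto M_n\Psi$.

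I would then consider the augmented system formed by (\ref{eq:ode}) together with the linear variational equation $(d/dt)\Psi=\partial_yF(y,t)\,\Psi$. Hamiltonicity of $F$ gives $\partial_yF=J\,\partial_{yy}H$ with $\partial_{yy}H$ symmetric, whence $\partial_yF\T J+J\,\partial_yF=0$ and
\[
\frac{d}{dt}\bigl(\Psi\T J\Psi\bigr)=\Psi\T\bigl(\partial_yF\T J+J\,\partial_yF\bigr)\Psi=0;
\]
thus $\Psi\T J\Psi$ is a ($y$-independent) quadratic first integral of the augmented system. Moreover, Theorem~\ref{th:variational} guarantees that applying the RK scheme to this augmented system produces, in the $\Psi$-block, the discrete trajectory $\Psi_{n+1}=M_n\Psi_n$.

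Applying Theorem~\ref{th:cooper} to the augmented system under hypothesis (\ref{eq:sympcond}) now gives $\Psi_{n+1}\T J\Psi_{n+1}=\Psi_n\T J\Psi_n$ for every $\Psi_n\in\mathbb{R}^{2d}$, and combining this with $\Psi_{n+1}=M_n\Psi_n$ (then polarising) yields $M_n\T J M_n=J$, which is precisely the symplecticness of the step. The only subtlety worth flagging is that Theorem~\ref{th:cooper}, as stated, deals with a bilinear form on the full state of the system to which RK is applied; here the full state is the pair $(y,\Psi)$ while $\Psi\T J\Psi$ depends only on the $\Psi$ block. This is harmless because the $\Psi$-dynamics is linear in $\Psi$ with $y$-dependent coefficients, so the RK stages for $\Psi$ enter Cooper's algebraic identity on their own, exactly as in the unaugmented setting. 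Apart from this observation and the standard polarisation step, no work beyond Theorems \ref{th:cooper} and \ref{th:variational} is needed.
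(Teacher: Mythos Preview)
Your reduction to Cooper's theorem via the variational equation is exactly the Bochev--Scovel observation the paper cites immediately after the theorem, so the route is sound in spirit. However, there is a genuine error in the polarisation step: since $J$ is skew-symmetric, $\Psi\T J\Psi=0$ identically for every $\Psi$, so its ``preservation'' is vacuous and says nothing whatsoever about $M_n$. Polarisation only reconstructs the \emph{symmetric} part of a bilinear form from the associated quadratic form, and here that symmetric part vanishes; a single variational vector cannot detect the symplectic two-form.

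The fix is to carry \emph{two} copies of the variational variable. Augment the state to $w=(y,\Phi,\Psi)$ with both $\Phi$ and $\Psi$ obeying $(d/dt)\Xi=\partial_yF(y,t)\,\Xi$, and set $I(w,w')=\Phi\T J\Psi'$. Then $I(w,w)=\Phi\T J\Psi$ is a bona fide bilinear first integral of the augmented system (your computation $\partial_yF\T J+J\,\partial_yF=0$ gives this), and Theorem~\ref{th:cooper} together with Theorem~\ref{th:variational} yields $(M_n\Phi_n)\T J(M_n\Psi_n)=\Phi_n\T J\Psi_n$ for all $\Phi_n,\Psi_n$, i.e.\ $M_n\T J M_n=J$, with no polarisation required. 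The paper's own intended proof is the direct computation that mimics the algebra of Lemma~\ref{lemma} with the two-form $\Phi\T J\Psi$ in the role of $S(q,p)$, producing the combinations $b_ia_{ij}+b_ja_{ji}-b_ib_j$ which vanish under (\ref{eq:sympcond}); your corrected argument simply packages that same algebra as a formal appeal to Theorem~\ref{th:cooper}.
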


The proof of Theorem \ref{th:lsss}, not included here, is very similar to the proof of  Theorem \ref{th:cooper}.
Just as for the conservation of quadratic first integrals, it turns out, see \cite{ssc}, Section 6.5, that the relations (\ref{eq:sympcond}) are essentially necessary for $y_n \mapsto y_{n+1}$  to be symplectic for each Hamiltonian system.

The set of relations (\ref{eq:sympcond}) thus ensures {\em two} different properties: quadratic conservation and symplecticness. These two properties are not unrelated:  symplecticness may be viewed a consequence of the quadratic conservation because, as noted in \cite{bochev}, the preservation of the symplectic structure by a Hamiltonian solution flow may be interpreted as a bilinear first integral of the solution flow of the associated variational system.

 The symplectic character of RK schemes satisfying (\ref{eq:sympcond}) has attracted much attention in view of the importance of Hamiltonian systems in the applications. On the other hand, it is fair to say that quadratic conservation has been to some extent played down in the geometric integration literature. For this reason, while schemes satisfying (\ref{eq:sympcond}) could have been called conservative, the following terminology is standard:

\begin{definition}The RK scheme (\ref{eq:rkabc}) is called {\em symplectic} (or canonical) if (\ref{eq:sympcond}) holds.
\end{definition}

 Our focus in this paper is on symplectic schemes in as far as they conserve quadratic invariants, as these are actually crucial in several applications. The discussion of any possible benefits derived from the symplectic character of the map $y_n\mapsto y_{n+1}$, including the existence of modified Hamiltonian systems, are out of our scope here. The paper \cite{vilmart} is, in this sense, complementary to the present work.

It was proved in \cite{abia} that the relations (\ref{eq:sympcond}) act as simplifying assumptions {\em vis-\`{a}-vis} the order conditions: once these relations are imposed, the order conditions corresponding to the different elementary differentials/rooted trees are no longer independent. For instance, it is a simple exercise to show that, when (\ref{eq:sympcond}) holds, the second order condition in (\ref{eq:ordcond1}) is a consequence of the first and therefore symplectic RK schemes of order $\geq 1$ automatically possess order $\geq 2$. Similarly the last order condition in (\ref{eq:ordcond1}) is a consequence of the first three.
In this way, for a general RK methods to have order $\geq 3$ for autonomous problems, there are 4 order conditions; for symplectic methods the number is only 2. For a symplectic RK method to have order $\geq \rho$ for autonomous problems there is an order condition for each so-called non-superfluous free tree with $\leq \rho$ vertices.

There are many symplectic RK methods \cite{ssc} including the Gauss methods (of maximal order $2s$ and positive weights) as first shown in \cite{bit}; however no symplectic RK scheme is explicit. The simplest Gauss method ($s=1$) is the familiar implicit midpoint rule.
\subsection{Partitioned Runge-Kutta schemes}
In some applications the components of the vector $y$ in (\ref{eq:ode}) appear partitioned into two blocks: $y = [q\T,p\T]\T$, $q\in\mathbb{R}^{D-d}$, $p\in \mathbb{R}^{d}$. Hamiltonian problems, where $d = D/2$, provide an example, as we have just seen. In those cases it may make sense to use a set of coefficients (\ref{eq:rkabc}) for the integration of the block $q$ and a second set
\begin{equation}\label{eq:rkABC}
A_{ij},\quad i,j = 1,\dots,s, \qquad B_i,\: C_i,\quad i=1,\dots, s,
\end{equation}
for the integration  of the block $p$. (There is no loss of generality in assuming that the number of stages $s$  in (\ref{eq:rkABC}) coincides with that in (\ref{eq:rkabc}): see \cite{ssc} Remark 3.2.)
The overall method is called a PRK scheme.
A more precise description follows.

 Denote by $F=[f\T,g\T]\T$, $f\in\mathbb{R}^{D-d}$, $g\in \mathbb{R}^{d}$ the  partitioning of $F$ induced by the partitioning  $[q\T,p\T]\T$ of $y$, so that (\ref{eq:ode}) reads
 \begin{equation}\label{eq:pode}
 \frac{d}{dt} q = f(q,p,t),\qquad \frac{d}{dt} p = g(q,p,t);
 \end{equation}
 then
the equations for the step $n\rightarrow n+1$ of the PRK method (\ref{eq:rkabc}), (\ref{eq:rkABC}) are
\begin{equation}\label{eq:prkstep1}
q_{n+1} = q_n +h_n \sum_{i=1}^s b_i k_{n,i},\quad
p_{n+1} = p_n +h_n \sum_{i=1}^s B_i \ell_{n,i}, \quad n = 0,\dots, N-1,
\end{equation}
where
\begin{equation}\label{eq:new2}
k_{n,i}= f(Q_{n,i}, P_{n,i}, t_n+c_ih_n),\qquad
\ell_{n,i}=g(Q_{n,i}, P_{n,i}, t_n+C_ih_n),
\end{equation}
and the internal stages  $Q_{n,i}$,  $P_{n,i}$, $i=1,\dots, s$, are defined by the relations
\begin{equation}\label{eq:prkstages1}
Q_{n,i} = q_n +h_n \sum_{i=1}^s a_{ij} k_{n,j}, \qquad
P_{n,i} = p_n +h_n \sum_{j=1}^s A_{ij} \ell_{n,j}.
\end{equation}

 PRK methods are not a mathematical nicety: the Verlet algorithm, the method of choice in molecular dynamics \cite{schlick} is one of them. In its so-called velocity form, the algorithm is written in the molecular dynamics literature as
 (it is a simple matter to rewrite the algorithm in the format (\ref{eq:prkstep1})--(\ref{eq:prkstages1})):
\begin{eqnarray*}
p_{n+1/2}& =& p_n + \frac{h_n}{2} g(q_n,t_n),\\ q_{n+1} &=& q_n + h_nM^{-1}p_{n+1/2},\\ p_{n+1} & =& p_{n+1/2} + \frac{h_n}{2} g(q_{n+1},t_{n+1}).
\end{eqnarray*}
Here the vectors $p$, $q$ and $g$ contain respectively the momenta, positions and forces and $M$ is the diagonal matrix of the masses. Note the way the $q$ and $p$ variables are advanced in different ways.

Clearly an RK scheme may be regarded as a particular instance of a PRK method where the two sets (\ref{eq:rkabc}), (\ref{eq:rkABC}) happen to coincide. For PRK methods to possess order $\geq \rho$ for autonomous problems, there is an order condition associated with each bicolour rooted tree with $\rho$ or less vertices (see e.g. \cite[Chapter III]{hlw}). For order $\geq 2$ the order conditions are:
\begin{eqnarray}\label{eq:ordcond2}
&\sum_{i}b_i = 1,\quad \sum_{i} B_i = 1,&\\ &\sum_{ij} b_ia_{ij} =\frac{1}{2},\quad \sum_{ij} b_iA_{ij} =\frac{1}{2},\quad\sum_{ij} B_ia_{ij} =\frac{1}{2},\quad\sum_{ij} B_iA_{ij} =\frac{1}{2};&\label{eq:ordcond3}
\end{eqnarray}
they correspond to the elementary differentials $f$, $g$, $(\partial_x f)f$, $(\partial_x f)g$, $(\partial_x g)f$, $(\partial_x g)g$ respectively.
It will be important later to note that, if the PRK (\ref{eq:rkabc}), (\ref{eq:rkABC})
has order $\rho$, then the RK scheme with coefficients (\ref{eq:rkabc}) and the RK scheme with coefficients (\ref{eq:rkABC}) have both order $\rho$. The converse  is not true: if
(\ref{eq:rkabc}) and (\ref{eq:rkABC})  are the coefficients of two RK schemes of order $\rho$, then the combined PRK scheme may have order $<\rho$. This is plain  in (\ref{eq:ordcond3}), where the second and third relations are necessary for  the PRK to have order $\geq 2$ but are obviously not required for (\ref{eq:rkabc}) and (\ref{eq:rkABC}) to be the coefficients of two different RK schemes of order $\geq 2$.

For PRK methods, the result corresponding to Theorem \ref{th:cooper} is (cf.  \cite[Chapter IV, Theorem 2.4]{hlw},  where only the autonomous case is envisaged):
\begin{theorem}\label{th:cooper2} Assume  that $S(\cdot,\cdot)$ is a real-valued bilinear map in $\mathbb{R}^d\times \mathbb{R}^{D-d}$ such that, for each $t_0$ and $A$, the solution $y(t) = [q(t)\T,p(t)\T]\T$ of (\ref{eq:ic}), (\ref{eq:pode}), satisfies $$\frac{d}{dt} S(q(t),p(t)) \equiv 0.$$ The relations
\begin{equation}\label{eq:sympcond2}
 b_i = B_i, \quad i=1,\dots, s,\quad b_iA_{ij}+B_ja_{ji} -b_iB_j=0, \quad i,j = 1,\dots, s,
\end{equation}
and
\begin{equation}\label{eq:sympcond2c}
\quad c_i=C_i, \quad i=1,\dots, s,
\end{equation}
 guarantee that, for each PRK trajectory  satisfying (\ref{eq:prkstep1})--(\ref{eq:prkstages1}), $S(q_{n},p_{n})$ is independent of $n$.
\end{theorem}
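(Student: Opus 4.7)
The plan is to mimic the proof of Theorem \ref{th:cooper} (Cooper's RK result), adapting it to the partitioned setting. First I would turn the continuous conservation law into a pointwise algebraic identity: because $S$ is bilinear,
$$\frac{d}{dt}S(q(t),p(t)) = S(f(q,p,t),p) + S(q,g(q,p,t)),$$
and since this vanishes along every trajectory, we can let the initial data explore all of $\mathbb{R}^D$ to conclude
$$S(f(q,p,t),p) + S(q,g(q,p,t)) = 0 \qquad \text{for all } q,p,t.$$

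Next I would compute the one-step increment by expanding via bilinearity and the update formulas (\ref{eq:prkstep1}):
$$S(q_{n+1},p_{n+1}) - S(q_n,p_n) = h_n\sum_i b_i\, S(k_{n,i},p_n) + h_n\sum_j B_j\, S(q_n,\ell_{n,j}) + h_n^2\sum_{i,j} b_iB_j\, S(k_{n,i},\ell_{n,j}).$$
I would then eliminate $q_n$ and $p_n$ from the linear-in-$h_n$ terms by using the stage equations (\ref{eq:prkstages1}) in the form $p_n = P_{n,i} - h_n\sum_j A_{ij}\ell_{n,j}$ and $q_n = Q_{n,j} - h_n\sum_i a_{ji}k_{n,i}$. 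This converts the increment into
$$h_n\sum_i\bigl[b_i\, S(k_{n,i},P_{n,i}) + B_i\, S(Q_{n,i},\ell_{n,i})\bigr] + h_n^2\sum_{i,j}\bigl[b_iB_j - b_iA_{ij} - B_ja_{ji}\bigr]S(k_{n,i},\ell_{n,j}),$$
where in the first sum I have tacitly relabelled the dummy index to align the pair $(Q_{n,i},\ell_{n,i})$.

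Finally, I would invoke the pointwise identity at each internal stage. Here is the point where the condition $c_i = C_i$ is indispensable: only when the $k$ and $\ell$ slopes use a common time argument $t_n + c_ih_n$ can we write $S(k_{n,i},P_{n,i}) + S(Q_{n,i},\ell_{n,i}) = 0$; combined with $b_i = B_i$ this kills the $\mathcal{O}(h_n)$ term. The $\mathcal{O}(h_n^2)$ term vanishes by the remaining relations in (\ref{eq:sympcond2}). Iterating gives $S(q_n,p_n)$ independent of $n$. The only mildly delicate step is keeping the bookkeeping of the two indexed substitutions straight and being explicit that the matching-abscissa assumption is what makes the non-autonomous case go through; in the autonomous case $c_i = C_i$ would be superfluous, as remarked in the source cited for the autonomous version of the theorem.
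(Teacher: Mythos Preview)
Your proof is correct and follows essentially the same approach as the paper: the paper isolates the algebraic manipulation (expanding $S(q_{n+1},p_{n+1})-S(q_n,p_n)$ via bilinearity, then eliminating $q_n,p_n$ through the stage equations) as a separate Lemma, but the computation and the use of the pointwise identity $S(f,p)+S(q,g)\equiv 0$ together with $c_i=C_i$ are identical to yours. The only difference is organisational---the paper states the lemma separately because it reuses that identity later---not mathematical.
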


As in the case of RK methods, the condition in the theorem is necessary for conservation to hold for all $S$ and all partitioned differential systems, see \cite[Chapter VI, Theorems 7.6, 7.10]{hlw}.
In the particular case of autonomous problems the abscissas play no role. Thus, to achieve conservation, it is not necessary to impose the condition (\ref{eq:sympcond2c}) whenever $f$ and $g$ are independent of $t$.
Note that the theorem only applies to a quadratic function of the form $S(q,p)$ which is not the most general possible; for instance the inner product $q\T q$ is not included in that format.

Before proving the theorem we present a simple algebraic auxiliary result that will  be used repeatedly later in other contexts.
\begin{lemma} \label{lemma} Let $q_n$, $p_n$, $Q_i$, $P_i$, $k_{n,i}$, $\ell_{n,i}$ be arbitrary vectors satisfying (\ref{eq:prkstep1}) and (\ref{eq:prkstages1}). If $S$ is bilinear and (\ref{eq:sympcond2}) holds, then
\begin{equation}\label{eq:lemma}
S(q_{n+1},p_{n+1}) -S(q_n,p_n) = h_n\sum_i b_i \Big( S(k_{n,i},P_{n,i})+S(Q_{n,i},\ell_{n,i}) \Big).
\end{equation}
\end{lemma}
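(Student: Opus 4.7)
The plan is a direct computation that exploits bilinearity of $S$ together with the PRK update equations; there is no clever trick beyond careful bookkeeping. I would expand both sides, match the two linear-in-$h_n$ contributions using only $b_i=B_i$, and then reduce the two quadratic-in-$h_n$ contributions to the symplecticness relation.

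First, using bilinearity and the update formulas $q_{n+1}=q_n+h_n\sum_i b_i k_{n,i}$, $p_{n+1}=p_n+h_n\sum_j B_j \ell_{n,j}$ from (\ref{eq:prkstep1}), I split
$$ S(q_{n+1},p_{n+1})-S(q_n,p_n) = h_n\sum_i b_i\,S(k_{n,i},p_n) + h_n\sum_j B_j\,S(q_n,\ell_{n,j}) + h_n^2\sum_{i,j} b_i B_j\,S(k_{n,i},\ell_{n,j}). $$
Second, I would substitute $P_{n,i}=p_n+h_n\sum_j A_{ij}\ell_{n,j}$ and $Q_{n,i}=q_n+h_n\sum_j a_{ij}k_{n,j}$ from (\ref{eq:prkstages1}) into the right-hand side of (\ref{eq:lemma}) and expand by bilinearity to obtain
$$ h_n\sum_i b_i\,S(k_{n,i},p_n) + h_n\sum_i b_i\,S(q_n,\ell_{n,i}) + h_n^2\sum_{i,j} b_i A_{ij}\,S(k_{n,i},\ell_{n,j}) + h_n^2\sum_{i,j} b_i a_{ij}\,S(k_{n,j},\ell_{n,i}). $$

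The two linear-in-$h_n$ pieces match between the two expressions as soon as $B_i=b_i$ for every $i$, which is the first half of (\ref{eq:sympcond2}); this already handles the \emph{entire} $n\to n+1$ increment up to the bilinear $h_n^2$ term in the slopes. What remains is to show that the two $h_n^2$ contributions agree. After renaming the dummy indices $i\leftrightarrow j$ in the last sum so that $S(k_{n,j},\ell_{n,i})$ becomes $S(k_{n,i},\ell_{n,j})$, the required identity reduces to the term-by-term equality
$$ b_i B_j \;=\; b_i A_{ij} + b_j a_{ji} \qquad (i,j=1,\dots,s), $$
which, since $b_j=B_j$, is exactly the second family of relations in (\ref{eq:sympcond2}).

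The main obstacle is really only notational: keeping the two sets of summation indices distinct when expanding, and being careful that the $a_{ji}$ in (\ref{eq:sympcond2}) comes out of the \emph{swapped} sum. No condition on the abscissas is used at this stage, which is consistent with the fact that (\ref{eq:sympcond2c}) enters in Theorem~\ref{th:cooper2} only to ensure that the slopes $k_{n,i}$ and $\ell_{n,i}$ are evaluated at a common argument so that $S(k_{n,i},P_{n,i})+S(Q_{n,i},\ell_{n,i})=\frac{d}{dt}S(q,p)\big|_{Y_{n,i}}\equiv 0$, the step that will eventually convert (\ref{eq:lemma}) into the invariance statement of that theorem.
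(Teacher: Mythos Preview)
Your proof is correct and follows essentially the same route as the paper: expand $S(q_{n+1},p_{n+1})-S(q_n,p_n)$ via bilinearity and (\ref{eq:prkstep1}), bring in the stage formulas (\ref{eq:prkstages1}), and let (\ref{eq:sympcond2}) cancel the quadratic-in-$h_n$ discrepancy. The only cosmetic difference is that the paper substitutes $p_n=P_{n,i}-h_n\sum_j A_{ij}\ell_{n,j}$ and $q_n=Q_{n,j}-h_n\sum_i a_{ji}k_{n,i}$ directly into the expanded left-hand side and simplifies, whereas you expand both sides separately and match terms; the algebra is identical.
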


\begin{proof}  Since  $S$ is  bilinear, we may write from (\ref{eq:prkstep1})
\begin{eqnarray*}
S(q_{n+1},p_{n+1})-S(q_n,p_n)& = & h_n \sum_i b_i S(k_{n,i}, p_n)+ h_n \sum_j B_j S(q_n, \ell_{n,j})\\&& \qquad\qquad{} + h_n^2 \sum_{ij} b_i B_j S(k_{n,i},\ell_{n,j}).
\end{eqnarray*}
Now use (\ref{eq:prkstages1}) to eliminate $q_n$ and $p_n$ from the right-hand side:
\begin{eqnarray*}
S(q_{n+1},p_{n+1}) -S(q_n,p_n)& = & h_n \sum_i b_i S(k_{n,i}, P_{n,i} - h_n \sum_j A_{ij} \ell_{n,j})\\&&\quad\quad\quad + h_n \sum_j B_j S(Q_{n,j}-\sum_i a_{ji} k_{n,i}, \ell_{n,j})\\&&\quad\quad\quad{} + h_n^2 \sum_{ij} b_i B_j S(k_{n,i},\ell_{n,j}).
\end{eqnarray*}
In view of the bilinearity and (\ref{eq:sympcond2}), the proof is complete.
\end{proof}
\medskip

\noindent{\bf Proof of the theorem:}
Conservation of $S$ implies that $$S(f(q,p,t),p)+S(q,g(q,p,t))\equiv 0,$$ because, along each solution $q(t)$, $p(t)$,
$$
 S\big(\frac{d}{dt} q(t),p(t)\big) + S\big(q(t),\frac{d}{dt} p(t)\big)=\frac{d}{dt}S(q(t),p(t))   = 0.
$$
Therefore (\ref{eq:new2}) and (\ref{eq:sympcond2c}) entail that
the right-hand side of (\ref{eq:lemma}) vanishes.
$\Box$
\medskip

For the preservation of the symplectic structure, the result (derived in \cite{suris2} and \cite{abia2} independently) is:
\begin{theorem}\label{th:lsss2}
Assume that the system (\ref{eq:pode}) is Hamiltonian. The relations (\ref{eq:sympcond2})--(\ref{eq:sympcond2c}) guarantee that the mapping $(q_n,p_n) \mapsto (q_{n+1},p_{n+1})$  defined in (\ref{eq:prkstep1})--(\ref{eq:prkstages1}) is symplectic.
\end{theorem}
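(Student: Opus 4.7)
The plan is to reduce the theorem to Theorem \ref{th:cooper2} via the observation (recalled earlier in the paper, following Bochev--Scovel) that the symplectic $2$-form, evaluated on pairs of tangent vectors, is a bilinear first integral of two copies of the variational system. Specifically, I would proceed as follows.

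First, I would form the doubled variational system: take two independent pairs of variations $(\delta q^{(k)},\delta p^{(k)})$, $k=1,2$, along a solution $(q(t),p(t))$ of the Hamiltonian system (\ref{eq:pode}). A short calculation using the Hamiltonian structure (which expresses $\partial_p f$ and $\partial_q g$ as symmetric blocks of the Hessian of $H$ and gives $\partial_q f = -(\partial_p g)\T$) shows that
$$S\bigl((\delta q^{(1)},\delta q^{(2)}),(\delta p^{(1)},\delta p^{(2)})\bigr) := (\delta q^{(1)})\T\delta p^{(2)} - (\delta q^{(2)})\T\delta p^{(1)}$$
is conserved along the flow of the doubled variational system. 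This is precisely the symplectic $2$-form evaluated on the two tangent vectors. Augmenting (\ref{eq:pode}) with the two variational subsystems, the enlarged state has a natural partition into a $q$-block $(q,\delta q^{(1)},\delta q^{(2)})$ and a $p$-block $(p,\delta p^{(1)},\delta p^{(2)})$, and $S$, viewed on the enlarged state, is a bilinear mapping of exactly the form covered by Theorem \ref{th:cooper2}.

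Next, I would observe that when the PRK scheme (\ref{eq:rkabc}), (\ref{eq:rkABC}) is applied to this augmented system, the scheme induced on each variational subsystem uses exactly the same coefficients and is in fact the linearisation of the PRK step on (\ref{eq:pode}) with respect to the initial condition. This is a direct consequence of differentiating the stage relations (\ref{eq:new2})--(\ref{eq:prkstages1}) with respect to $(q_n,p_n)$; the argument is the PRK analogue of the commutation statement for plain RK schemes (Theorem \ref{th:variational}) and requires no new ideas. Consequently, the discrete values $(\delta q_n^{(k)},\delta p_n^{(k)})$ produced by the augmented PRK are two tangent vectors transported by the differential of the discrete map $(q_n,p_n)\mapsto(q_{n+1},p_{n+1})$.

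Applying Theorem \ref{th:cooper2} to the augmented partitioned system under hypotheses (\ref{eq:sympcond2})--(\ref{eq:sympcond2c}) then yields $S(Q_{n+1},P_{n+1})=S(Q_n,P_n)$. Restricted to the variational components, this says that the differential of the discrete flow preserves the symplectic $2$-form on every pair of tangent vectors, which is exactly the symplecticness of $(q_n,p_n)\mapsto(q_{n+1},p_{n+1})$. The main technical obstacle is the first step, namely verifying that $S$ is a first integral of the doubled variational system; this is the calculation that distinguishes Hamiltonian vector fields from generic partitioned ones and relies on the symmetry of the Hessian of $H$. Once this is in hand, the result follows by a clean transfer from the continuous invariant to the discrete one through Theorem \ref{th:cooper2}, with no further computation beyond what is already encapsulated in Lemma \ref{lemma}.
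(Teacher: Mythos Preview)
Your proposal is correct. Note, however, that the paper does not actually prove Theorem~\ref{th:lsss2}: it merely cites \cite{suris2} and \cite{abia2}, and for the RK analogue (Theorem~\ref{th:lsss}) remarks that the omitted proof is ``very similar to the proof of Theorem~\ref{th:cooper}''. By analogy, the intended route would be a direct computation with the exterior forms $dq_{n+1}\wedge dp_{n+1}-dq_n\wedge dp_n$, expanded via the stage relations exactly as in Lemma~\ref{lemma} and the proof of Theorem~\ref{th:cooper2}.

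Your approach is different and more conceptual: rather than redoing that wedge-product calculation, you invoke the Bochev--Scovel observation (which the paper itself mentions in the discussion after Theorem~\ref{th:lsss}) to recast symplecticness as the conservation of a bilinear invariant of a doubled variational system, and then apply Theorem~\ref{th:cooper2} as a black box. This is clean and economical, since it reuses Theorem~\ref{th:cooper2} rather than reproving it in disguise; the only extra ingredient you need is the PRK analogue of Theorem~\ref{th:variational} (PRK discretisation commutes with forming variational equations), which indeed follows by differentiating (\ref{eq:prkstep1})--(\ref{eq:prkstages1}) with respect to $(q_n,p_n)$. Both routes ultimately rest on the same algebraic identity in Lemma~\ref{lemma}; yours simply factors the argument through the already-proved conservation theorem.
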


The conditions (\ref{eq:sympcond2})--(\ref{eq:sympcond2c})  are essentially necessary for symplecticness \cite{ssc} and hence the following definition:
\begin{definition}The PRK scheme (\ref{eq:rkabc}), (\ref{eq:rkABC}) is called {\em symplectic} if (\ref{eq:sympcond2})--(\ref{eq:sympcond2c}) hold.
\end{definition}

If the PRK is symplectic, there is a reduction in the number of independent order conditions; the classes of equivalent order conditions were first described by Hairer \cite{hairer}. An alternative treatment (see \cite{ander}) based on so-called H-trees was given by Murua in his 1995 thesis, cf.\ \cite{numer}. For instance, for a symplectic PRK method to have order $\geq 4$ it is necessary to impose 13 order conditions: for general PRK methods that number is 36.

\section{Variational systems and their adjoints}
\label{s:adjoint}

We now explore the role of symplectic RK schemes when integrating adjoint variational systems. A comprehensive discussion of the use of adjoints to determine sensitivities is not within our scope here. The paper \cite{giles} provides a general introduction, together with applications to aerodynamics. Applications of adjoints to atmospheric models are discussed in \cite{sandu}. Of course the idea of an adjoint problem is  not restricted to differential equations; see \cite{cacuci} for an early paper describing a very general framework.

\subsection{The continuous problem: quadratic conservation}

 We now present the math\-ematical found\-ations of the remainder of the paper. Consider a $d$-dimensional differential system
\begin{equation}\label{eq:odex}
\frac{d}{dt}x = f(x,t)
\end{equation}
  and denote by $\alpha\in\mathbb{R}^d$  the corresponding initial value and by $\bar x(t)$ the solution that arises from the perturbed initial condition $\bar x(t_0) =\alpha+\eta$.
Linearisation of (\ref{eq:odex}) around $x(t)$ shows that, as $|\eta|\rightarrow 0$, $\bar x(t) = x(t) + \delta(t) +o(|\eta|)$, where $\delta$ solves the  (linear) {\em variational system} (see e.g.\ \cite{hnw} Section I.14)
\begin{equation}\label{eq:var}
\frac{d}{dt} \delta = \partial_x f(x(t),t)\, \delta,
\end{equation}
($\partial_x f$ is the Jacobian matrix of $f$ with respect to $x$). Thus, when $x(t)$ is known, solving for $\delta(t_0+T)$ the initial-value problem given by (\ref{eq:var}) and $\delta(t_0)=\eta$ yields an estimate for the change in solution $\bar x(t)-x(t)$; see a simple example in Fig.~\ref{fig:lotka}.

The {\em adjoint} system of (\ref{eq:var}) is given by
\begin{equation}\label{eq:adj}
\frac{d}{dt} \lambda = - \partial_x f(x(t),t)\T\, \lambda.
\end{equation}
(To avoid confusion, variables in this paper are always {\em column vectors}; from a mathematical point of view it would have been better to write sensitivities, Lagrange multipliers and momenta as row vectors, as they belong to the dual space of the space of states.)
The right-hand side in  (\ref{eq:adj}) has been chosen in such a way that  the following proposition is valid. More precisely, it is best to think that {\em the adjoint is the system  for which the conservation property (\ref{eq:prop})  below holds}.
\begin{proposition}\label{prop}
For each $x$, $\delta$, $\lambda\in\mathbb{R}^d$ and real $t$:
\begin{equation*}
 \big(-\partial_x f(x,t)\T\, \lambda\big)\T\delta+ \lambda\T \partial_x f(x,t)\delta = 0.
\end{equation*}

 Therefore if $\delta(t)$ and  $\lambda(t)$ are arbitrary solutions of (\ref{eq:var}),  (\ref{eq:adj}) respectively, then
\begin{equation}\label{eq:ddd}
\frac{d}{dt} \lambda(t)\T\delta(t) = \Big(\frac{d}{dt} \lambda(t)\Big)\T\delta(t)+\lambda(t)\T\Big(\frac{d}{dt} \delta(t)\Big) \equiv 0
\end{equation}
and accordingly
\begin{equation}\label{eq:prop}
\lambda(t_0+T)\T\delta(t_0+T)= \lambda(t_0)\T\delta(t_0).
\end{equation}
\end{proposition}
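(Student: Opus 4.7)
The proposition bundles together three assertions of increasing content: an algebraic identity, a differential identity along solutions, and an integrated conservation law. My plan is to dispatch them in exactly that order, since each one is a direct consequence of the previous together with one additional ingredient (the defining ODEs, then the fundamental theorem of calculus).

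First I would verify the pointwise algebraic identity. Using the transposition rule $(AB)^{\mathsf{T}} = B^{\mathsf{T}} A^{\mathsf{T}}$ and the involutivity of transposition, I can rewrite
\[
\bigl(-\partial_x f(x,t)^{\mathsf{T}}\lambda\bigr)^{\mathsf{T}}\delta = -\lambda^{\mathsf{T}}\bigl(\partial_x f(x,t)^{\mathsf{T}}\bigr)^{\mathsf{T}}\delta = -\lambda^{\mathsf{T}}\partial_x f(x,t)\,\delta,
\]
which immediately cancels the second summand. This is the one place where the particular form of the adjoint right-hand side in (\ref{eq:adj}) is used; the minus sign and the transpose are tailored precisely to produce this cancellation.

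Next, for any solutions $\delta(t)$ of (\ref{eq:var}) and $\lambda(t)$ of (\ref{eq:adj}), I would apply the product rule to the scalar function $t\mapsto \lambda(t)^{\mathsf{T}}\delta(t)$, obtaining the first equality in (\ref{eq:ddd}). Substituting the right-hand sides of (\ref{eq:adj}) and (\ref{eq:var}) for the derivatives and invoking the algebraic identity just proved with $x = x(t)$ shows that this derivative vanishes identically, giving (\ref{eq:ddd}).

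Finally, since $\lambda(t)^{\mathsf{T}}\delta(t)$ has zero derivative on $[t_0,t_0+T]$, it is constant there, which is precisely (\ref{eq:prop}). There is no real obstacle: the only substantive step is the transposition calculation in the first part, and the rest is bookkeeping. The proposition should therefore be presented as a motivating remark justifying the definition of the adjoint system, rather than as a theorem with a deep argument.
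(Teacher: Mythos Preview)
Your proof is correct and follows exactly the natural line of reasoning. The paper does not supply a separate proof for this proposition: it treats the algebraic identity as self-evident from transposition and builds the word ``Therefore'' into the statement itself, so your three-step argument (transposition identity, product rule plus substitution, constancy) is precisely what the paper leaves implicit.
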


Why is the adjoint system useful?
 Regard $\eta$ as a parameter and assume that we are interested in finding $\omega\T\delta(t_0+T)$ for fixed $\omega\in\mathbb{R}^d$, i.e.\ in estimating, at the final time $t_0+T$, the change along  the direction of $\omega$ of the solution of (\ref{eq:odex}) induced by the initial perturbation $\alpha \mapsto \alpha+\eta$. (For instance choosing $\omega$ equal to  the $r$-th co-ordinate vector would correspond to estimating the change in the $r$-th component of the solution.) When $x(t)$ is known, we solve (\ref{eq:adj}) with the {\em final} condition
$\lambda(t_0+T)=\omega$ and note that the quantity we seek coincides with $\lambda(t_0)\T\eta$ because, from the proposition,
$$
\omega\T\delta(t_0+T) = \lambda(t_0+T)\T\delta(t_0+T) = \lambda(t_0)\T\delta(t_0) =\lambda(t_0)\T\eta.
$$
The advantage of this procedure is that, as $\eta$ varies, the computation of $\lambda(t_0)\T\eta$ requires only {\em one} integration of (\ref{eq:adj}); the computation of $\omega\T\delta(t_0+T)$ via (\ref{eq:var}) would need a fresh integration for each new choice of $\eta$  (see Fig.~\ref{fig:lotka}).

As an application, consider the task of computing the  gradient, $\nabla_\alpha{\cal C}( x(t_0+T) )$,  of  a real-valued  function  $\cal C$ with respect to the initial data $\alpha$. We set $\omega =\nabla_x{\cal C}(x(t_0+T))$ in the preceding construction and successively let the $r$-th coordinate vector, $r=1,\dots,d$, play the  role of $\eta$ to conclude that
the gradient sought has the value $\lambda(t_0)$ where $\lambda(t)$ is the solution of the adjoint system with final condition $\lambda(t_0+T) = \nabla_x{\cal C}(x(t_0+T))$. Only one integration is required to find $d$ derivatives $\partial/\partial \alpha^r$. The adjoint system (\ref{eq:adj}) \lq pulls back\rq\ gradients with respect to $x(t_0+T)$ into gradients with respect to $x(t_0)$.

\begin{figure}[h]
%\begin{center}
\vskip -6 truecm
\hskip -0.75 truecm
\includegraphics[scale=.70]{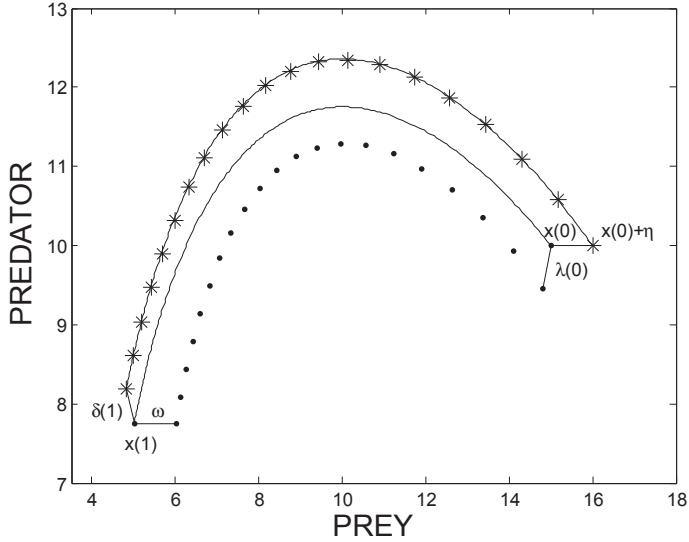}%
%\end{center}
\vskip -6 truecm
\caption{\small Two-species Lotka-Volterra system $dx^1/dt = x^1-0.2x^1x^2$, $dx^2/dt = -2x^2+0.2x^1x^2$ (superscripts indicate components of vectors); $x^1$ and $x^2$ represent, in suitable units, numbers of preys and predators respectively. The solid lines give, for $0\leq t\leq 1$, the unperturbed solution $x(t)$ with initial condition $ x(0) = (15,10)$ and a perturbed solution $\bar x(t)$ with
$\bar x(0) =x(0)+\eta =(16,10)$: an increase in the number of preys at $t=0$ leads at $t=1$ to a decrease in the number of preys  and to an increase in the number of predators. The stars are the points $x(t)+\delta(t)$, $ t = 0, 0.05, 0.10, \dots$, where $\delta$ solves the variational system; they almost coincide with the corresponding values of
the perturbed solution
$\bar x(t)$. In particular, the change in the number of preys, $\bar x^1(1)-x^1(1)$, is very well approximated by $\delta^1(1)= -0.1786\dots$, i.e. by the inner product $\omega\T \delta(1)$, where $\omega$ denotes the first co-ordinate vector $(1,0)=\nabla x^1$. The variational equations move $\eta =\delta(0)$ forward to $\delta(1)$. The dots show how the adjoint equations move $\omega = \lambda(1)$ backward to yield $\lambda(0) =\nabla_{x(0)} x^1(1)$, the gradient of $x^1$ as a function of $x(0)$. The inner product  $\omega\T \delta(1)$ exactly coincides with $\lambda(0)\T \eta$. In a Lotka-Volterra system with $d$ species, a single integration of the adjoint system is necessary to find the $d$-dimensional gradient of $x^1(1)$ as a function of $x(0)$. }
\label{fig:lotka}
\end{figure}

\subsection{The continuous problem: Lagrange multipliers}
\label{ss:lagrange}
We shall also need  an alternative derivation of the recipe  $\nabla_\alpha{\cal C}( x(t_0+T) )=\lambda(t_0)$ just found. Since the use of Lagrange multipliers (see e.g. \cite[Section 2.5]{giles}) in this connection (as distinct from their use in minimisation) may not be known to some readers, we give full details. Define the
Lagrangian functional ${\mathcal L}= {\mathcal L}(\hat \alpha,\hat x,\hat \lambda_0,\hat \lambda)$
$$
{\mathcal L} ={\cal C}(\hat x(t_0+T))-\hat\lambda_0\T\big(\hat x(t_0)-\hat\alpha\big)-\int_{t_0}^{t_0+T} \hat\lambda(t)\T \Big(\frac{d}{dt} \hat x(t) - f(\hat x(t),t)\Big)\,dt,
$$
where, $\hat \alpha$, $\hat \lambda_0$ are arbitrary vectors, $\hat x$, $\hat \lambda$ arbitrary functions. A key point here is that, whenever $\hat x$ is a solution of (\ref{eq:odex}) and $\hat x(t_0) = \hat \alpha$, the value of ${\cal L}(\hat\alpha,\hat x,\hat \lambda_0,\hat \lambda)$ coincides with ${\cal C}(\hat x(t_0+T))$.

If $\eta$ and $\delta$ are the variations of $\hat\alpha$ and $\hat x$ respectively, the variation $\delta \mathcal L$ of the functional is
\begin{eqnarray*}
\delta {\mathcal L}&=&\nabla_x {\cal C}(\hat x(t_0+T))\T \delta(t_0+T)-\hat\lambda_0\T \big(\delta(t_0)-\eta\big)\\
&& \qquad\qquad-\int_{t_0}^{t_0+T} \hat\lambda(t)\T \Big(\frac{d}{dt} \delta(t) - \partial_x f(\hat x(t),t)\delta(t)\Big)\,dt,
\end{eqnarray*}
so that, after integration by parts,
\begin{eqnarray*}
\delta{\mathcal L}&=&\big(\nabla_x {\cal C}(\hat x(t_0+T))-\hat\lambda(t_0+T)\big)\T \delta(t_0+T) +\hat\lambda(t_0)\T \eta \\
&&\qquad  {}+\big(\hat\lambda(t_0)-\hat\lambda_0\big)\T \delta(t_0)\\
&&\qquad{}+ \int_{t_0}^{t_0+T} \Big( \frac{d}{dt}\hat\lambda(t)^T\delta(t)+\hat\lambda(t)^T\partial_x f(\hat x(t),t)\delta(t)\Big)\,dt.
\end{eqnarray*}
We now make  choices $\lambda_0$, $\lambda$ (depending on $\hat\alpha$ and $\hat x$) for the (so far arbitrary) multipliers $\hat\lambda_0$, $\hat\lambda$.  We define $\lambda$ as the solution of the equation (\ref{eq:adj}) (with $\hat x(t)$ in lieu of $x(t)$) subject to the final condition $\lambda(t_0+T)=\nabla_x {\cal C}(\hat x(t_0+T))$ and  set
$\lambda_0=\lambda(t_0)$.
These choices ensure that, at  $\hat\alpha$, $\hat x$, the {\em intermediate} variation $\delta(t)$ does not contribute to $\delta \mathcal L$; we then have (at $\hat \alpha$, $\hat x$) $\delta {\mathcal L}= \lambda(t_0)\T \eta$ or, in other words, $\lambda(t_0)$ is the gradient of $\mathcal L$ as a function of $\hat\alpha$. Since, as pointed out above, if $\hat x$ solves (\ref{eq:odex}) and $\hat x(t_0) = \hat \alpha$, then ${\cal L}(\hat\alpha,\hat x,\hat \lambda_0,\hat \lambda)={\cal C}(\hat x(t_0+T))$, we conclude that
$\lambda(t_0) =\nabla_\alpha{\cal C}( x(t_0+T) )$ as we wished to prove. The original system (\ref{eq:odex}) and the initial condition may also be retrieved from the Lagrangian by making zero the variations with respect to  $\hat \lambda$ and $\hat \lambda_0$ respectively.

The same approach may also be used  if we wish to make things more involved and introduce the velocity $(d/dt)\hat x = \hat k$  as a new argument in the Lagrangian. To simplify the notation we shall hereafter
drop all hats, so that the same symbols $\alpha$, $x$, \dots will be used for the arbitrary arguments of the Lagrangian (that previously were written as $\alpha$, $x$, \dots) and for the corresponding values at the solution sought.
When the velocity is considered as a new argument, the Lagrangian becomes
\begin{eqnarray}\nonumber
{\mathcal L} &=&{\cal C}(x(t_0+T))-\lambda_0\T\big(x(t_0)-\alpha\big)\\&&\nonumber\qquad{}
-\int_{t_0}^{t_0+T} \lambda(t)\T \Big(\frac{d}{dt} x(t) - k(t)\Big)\,dt\\&&\qquad{}-\int_{t_0}^{t_0+T} \Lambda(t)\T \Big(k(t) - f(x(t),t)\Big)\,dt\label{eq:friday}.
\end{eqnarray}
Taking variations and choosing the multipliers to cancel the undesired contributions to $\delta \mathcal L$, leads to the relations $\lambda(t_0) =\nabla_\alpha{\cal C}( x(t_0+T) )$, $\lambda(t_0+T)=\nabla_x {\cal C}(x(t_0+T))$, $\lambda_0=\lambda(t_0)$
 found above and, additionally, to $\Lambda(t) \equiv \lambda(t)$ (as expected).

\subsection{The discrete problem: RK integration}

Let us suppose that (\ref{eq:odex}) has been discretised by means of the RK scheme (\ref{eq:rkabc}) to get, $n = 0,\dots,N-1$,
\begin{eqnarray}
\label{eq:rkstepx}
x_{n+1} &=& x_n +h_n \sum_{i=1}^s b_i {k}_{n,i},
\\
\label{eq:new3}
{k}_{n,i}& =&f(X_{n,i},t_n+c_ih_n),  \quad i=1,\dots, s,\\
\label{eq:rkstagesx}
X_{n,i} &=& x_n + h_n \sum_{j=1}^s a_{ij} {k}_{n,j}, \quad i=1,\dots, s,
\end{eqnarray}
and that, in analogy with the preceding material, we wish to estimate the impact on $x_N$ of a perturbation of the initial condition $x_0=\alpha$. Linearisation of the RK equations (\ref{eq:rkstepx})--(\ref{eq:rkstagesx}) around $x_n$, $X_{n,i}$ shows that the perturbed RK solution
$\bar x_n$, $n=0,\dots, N$, satisfies $\bar x_n= x_n+\delta_n+o(|\eta|)$ with
\begin{eqnarray}\label{eq:rkstepvar}
\delta_{n+1} &=& \delta_n +h_n \sum_{i=1}^s b_i  d_{n,i}, \\
\label{eq:new4}
 d_{n,i}&=& \partial_xf(X_{n,i},t_n+c_ih_n)\Delta_{n,i}, \quad i=1,\dots, s,\\
\label{eq:rkstagesvar}
\Delta_{n,i} &=& \delta_n + h_n \sum_{j=1}^s a_{ij} d_{n,j}, \quad i=1,\dots, s
\end{eqnarray}
(the vectors $d_{n,i}$ and $\Delta_{n,i}$ are the variations in the slopes ${k}_{n,i}$ and stages $X_{n,i}$ respectively).

On the other hand, if we regard the given differential equations (\ref{eq:odex}) together with the variational equations (\ref{eq:var}) as a $2d$-dimensional system for the vector $y=[x\T,\delta\T]\T$ and apply the RK scheme as in (\ref{eq:rkstep})--(\ref{eq:rkstages}), we also arrive at
(\ref{eq:rkstepx})--(\ref{eq:rkstagesvar}).  We have thus proved, as in, say, \cite[Chapter VI, Lemma 4.1]{hlw}:
\begin{theorem}\label{th:variational}The process of RK discretisation commutes with forming variational equations: the RK discretisation of the continuous variational equations (\ref{eq:odex})--(\ref{eq:var}) yields  the variational equations (\ref{eq:rkstepx})--(\ref{eq:rkstagesvar}) for the RK discretisation.
\end{theorem}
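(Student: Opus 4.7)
The plan is to establish the identity of two sets of recurrences by matching them block-by-block. The linearization half of the argument has already been carried out in the text leading up to equations (\ref{eq:rkstepvar})--(\ref{eq:rkstagesvar}): differentiating (\ref{eq:rkstepx})--(\ref{eq:rkstagesx}) with respect to the initial data, and writing $\delta_n$, $\Delta_{n,i}$, $d_{n,i}$ for the first-order variations of $x_n$, $X_{n,i}$, $k_{n,i}$, one obtains precisely those relations (the chain rule turns $k_{n,i}=f(X_{n,i},t_n+c_ih_n)$ into $d_{n,i}=\partial_x f(X_{n,i},t_n+c_ih_n)\,\Delta_{n,i}$, while the linear relations (\ref{eq:rkstepx}) and (\ref{eq:rkstagesx}) are preserved in form). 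Thus it only remains to show that the same recurrences emerge when the RK formulae (\ref{eq:rkstep})--(\ref{eq:rkstages}) are applied to the combined $2d$-dimensional system.

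To do this I would set $y=[x\T,\delta\T]\T\in\mathbb{R}^{2d}$ and define the composite right-hand side
\begin{equation*}
F(y,t)=\begin{bmatrix} f(x,t)\\ \partial_x f(x,t)\,\delta\end{bmatrix},
\end{equation*}
so that (\ref{eq:odex}) together with (\ref{eq:var}) reads $(d/dt)y=F(y,t)$. Writing the internal stages as $Y_{n,i}=[X_{n,i}\T,\Delta_{n,i}\T]\T$ and the slopes as $K_{n,i}=[k_{n,i}\T,d_{n,i}\T]\T$, the stage equation $K_{n,i}=F(Y_{n,i},t_n+c_ih_n)$ splits exactly into (\ref{eq:new3}) (top block) and (\ref{eq:new4}) (bottom block); similarly the internal-stage definition (\ref{eq:rkstages}) splits into (\ref{eq:rkstagesx}) and (\ref{eq:rkstagesvar}), and the update (\ref{eq:rkstep}) splits into (\ref{eq:rkstepx}) and (\ref{eq:rkstepvar}). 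Because the first block of $F$ depends only on the first block of $y$, the $x$-subsystem decouples and reproduces the RK discretization of (\ref{eq:odex}) unchanged; the $\delta$-subsystem is then driven by the values $X_{n,i}$ produced by that first block.

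Comparing the two descriptions term-by-term yields the conclusion. The argument is essentially bookkeeping, and there is no genuine obstacle; the only point that deserves an explicit check is that the Jacobian $\partial_x f$ appearing in (\ref{eq:new4}) is evaluated at $X_{n,i}$ — that is, at the $x$-block of the composite stage $Y_{n,i}$ — rather than at some other linearization point. This is automatic once one recognizes that the RK method necessarily evaluates $F$ at $Y_{n,i}$, so the argument of $\partial_x f$ inside the bottom block of $F(Y_{n,i},t_n+c_ih_n)$ is exactly $X_{n,i}$, which matches what one gets from differentiating (\ref{eq:new3}). Hence both constructions produce the identical system (\ref{eq:rkstepx})--(\ref{eq:rkstagesvar}), proving the commutation.
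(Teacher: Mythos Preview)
Your proposal is correct and follows exactly the paper's own argument: the paper also derives (\ref{eq:rkstepvar})--(\ref{eq:rkstagesvar}) by linearising (\ref{eq:rkstepx})--(\ref{eq:rkstagesx}) and then observes that applying the RK scheme to the combined $2d$-dimensional system $y=[x\T,\delta\T]\T$ with right-hand side $F(y,t)=[f(x,t)\T,(\partial_x f(x,t)\delta)\T]\T$ reproduces the same equations. Your write-up simply spells out the block decomposition in more detail than the paper does.
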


The situation for the adjoint equations is not quite as neat (cf. \cite{sirkes}). In order to find the discrete sensitivity $\omega\T\delta_N$ we would like to numerically integrate (\ref{eq:adj}) with final condition $\lambda_N =\omega$ in such a way that (cf.\ (\ref{eq:prop}))
\begin{equation}\label{eq:disprop}
\lambda_N\T\delta_N= \lambda_0\T\delta_0.
\end{equation}
Although in actual computation the approximations $\lambda_n$ are to be found without using the equations (\ref{eq:rkstepvar})--(\ref{eq:rkstagesvar}) for $\delta_n$ (this is the whole point behind the use of adjoints), let us consider for a moment the $3d$-dimensional system (\ref{eq:odex})--(\ref{eq:adj}) for the extended vector $y=[x\T,\delta\T,\lambda\T]\T$. Then the condition (\ref{eq:disprop}) demands that we integrate this large system in such a way as to {\em exactly} preserve the invariant $I(y(t),y(t)) = \lambda(t)\T\delta(t)$  in (\ref{eq:ddd}). According to Theorem \ref{th:cooper}, we may achieve this goal by using the RK scheme (\ref{eq:rkabc}) {\em provided that  it is  symplectic}. This results in the
relations (\ref{eq:rkstepx})--(\ref{eq:rkstagesvar}) in tandem with ($n = 0,\dots,N-1$):
\begin{eqnarray}\label{eq:rksteplambda}
\lambda_{n+1} &=& \lambda_n +h_n \sum_{i=1}^s b_i \ell_{n,i}, \\
\label{eq:new5}
\ell_{n,i} &=& -\partial_xf(X_{n,i}, t_n+c_ih_n)\T \Lambda_{n,i}, \quad i=1,\dots, s,
\\
\label{eq:rkstageslambda}
\Lambda_{n,i} &=& \lambda_n +h_n \sum_{j=1}^s a_{ij}\ell_{n,j}, \quad i=1,\dots, s.
\end{eqnarray}

Let us summarise the preceding discussion:

\begin{theorem}\label{th:adjoint}
Assume that the $3d$-dimensional system (\ref{eq:odex})--(\ref{eq:adj}) is discretised by a {\em symplectic} RK scheme (\ref{eq:rkabc}). Then for any RK solution
(\ref{eq:disprop}) holds. In particular, for the RK solution specified by the initial condition $x_0 = \alpha$, $\delta_0=\eta$ together with the final condition $\lambda_N=\omega$,
$$
\omega\T\delta_N = \lambda_0\T\eta.
$$
\end{theorem}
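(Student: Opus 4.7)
The natural route is to view the three equations (\ref{eq:odex})--(\ref{eq:adj}) as a single $3d$-dimensional system for the extended vector $y=[x\T,\delta\T,\lambda\T]\T$ and then invoke Theorem \ref{th:cooper} with a judicious choice of quadratic first integral. Because the three blocks in $y$ are triangularly coupled (the $x$-equations form a closed subsystem; the $\delta$-equations depend on $x$ but not on $\lambda$; the $\lambda$-equations depend on $x$ but not on $\delta$), applying a single RK formula (\ref{eq:rkstep})--(\ref{eq:rkstages}) to this combined $3d$ system is equivalent to applying it blockwise, and produces exactly (\ref{eq:rkstepx})--(\ref{eq:rkstagesvar}) together with (\ref{eq:rksteplambda})--(\ref{eq:rkstageslambda}).

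The next step is to exhibit the invariant. I would introduce the bilinear form $I\colon\mathbb{R}^{3d}\times\mathbb{R}^{3d}\to\mathbb{R}$ defined, for generic $y=[x\T,\delta\T,\lambda\T]\T$ and $y'=[{x'}\T,{\delta'}\T,{\lambda'}\T]\T$, by $I(y,y')=\lambda\T\delta'$. This map is bilinear in its two arguments and its diagonal evaluation is $I(y,y)=\lambda\T\delta$. Identity (\ref{eq:ddd}) of Proposition \ref{prop} states precisely that $(d/dt)I(y(t),y(t))\equiv 0$ along every solution of the combined $3d$ system, so $I$ qualifies as a quadratic first integral in the sense required by Theorem \ref{th:cooper}.

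With these ingredients in place, Theorem \ref{th:cooper} applies directly: the symplectic hypothesis is precisely condition (\ref{eq:sympcond}), so the quantity $I(y_n,y_n)=\lambda_n\T\delta_n$ is independent of $n$, which is (\ref{eq:disprop}). The second assertion of the theorem then follows by substituting $\delta_0=\eta$ and $\lambda_N=\omega$. I do not foresee a real obstacle; the only subtlety worth flagging is that the $\lambda$ block is prescribed by a final rather than initial condition, but this is harmless, because the conservation identity $I(y_n,y_n)=\mathrm{const}$ is a purely algebraic consequence of the RK relations (\ref{eq:rkstepx})--(\ref{eq:rkstageslambda}) being simultaneously satisfied and is insensitive to the direction in which those equations are actually solved.
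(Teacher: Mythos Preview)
Your proposal is correct and follows essentially the same route as the paper: view (\ref{eq:odex})--(\ref{eq:adj}) as a single $3d$-dimensional system for $y=[x\T,\delta\T,\lambda\T]\T$, recognise $I(y,y)=\lambda\T\delta$ as a quadratic first integral via Proposition~\ref{prop}, and invoke Theorem~\ref{th:cooper} under the symplecticness hypothesis (\ref{eq:sympcond}). Your added remarks---that the triangular coupling makes the combined RK discretisation split into (\ref{eq:rkstepx})--(\ref{eq:rkstagesvar}) and (\ref{eq:rksteplambda})--(\ref{eq:rkstageslambda}), and that the final condition on $\lambda$ is harmless because conservation is an algebraic consequence of the RK relations---are correct and make explicit points the paper leaves implicit.
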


For a non-symplectic RK scheme of order $\rho$, $\omega\T\delta_N$ and  $\lambda_0\T\eta$ are approximations of order $\rho$  to their continuous counterparts $\omega\T\delta(t_0+T)$
and $\lambda(t_0)\T \eta$ respectively and  therefore $\lambda_0\T\eta$ will be a $\mathcal{O}(h^\rho)$ approximation to the true sensitivity $\omega\T\delta_N$ of the discrete solution. See the example in Table~\ref{table:table} where the Euler integrator was chosen so as to have large errors and see clearly the difference between $\omega\T\delta_N$ and  $\lambda_0\T\eta$.

\begin{table}
\begin{center}
\begin{tabular}{ccccc}
$h$ & $\lambda_0\T\eta$ & $\omega\T\delta_N$ & $\lambda_0\T\eta-\lambda(0)\T\eta$&   $\omega\T\delta_N-\omega\T \delta(1)$\\\hline
$0.100$ & $-0.1070$ & $-0.2497$ & $0.0717$ & $-0.0710$\\
$0.050$ & $-0.1401$ & $-0.2135$ & $0.0385$ & $-0.0348$\\
$0.025$ & $-0.1588$ & $-0.1959$ & $0.0199$ & $-0.0172$
\end{tabular}
\end{center}
\caption{\small Euler integration on a uniform grid of the $x$, $\delta$, $\lambda$ equations for the Lotka-Volterra problem in Fig.~\ref{fig:lotka}. The lack of symplecticness of the integrator results in $\lambda_0\T\eta$ being different from $\omega\T\delta_N$: the discretisation of the adjoint equations does not provide the adjoint of the discretisation. The convergence of the integrator implies that, as the grid is refined, $\lambda_0\T\eta$ and $\omega\T\delta_N$ are $\mathcal{O}(h)$ away from their common limit $\lambda(0)\T\eta=\omega\T\delta(1)\approx -0.1786$, as borne out by the last two columns. When, alternatively, the $\lambda$ equations are integrated with the Radau method (\ref{eq:radau})
the numerical results for $\lambda_0\T\eta$ coincide with those displayed in the third column of the table.
}
\label{table:table}
\end{table}

In practice, the variational equations (\ref{eq:var}) do {\em not need to be integrated}.  We successively find $x_0$, $x_1$, \dots, $x_N$ via (\ref{eq:rkstepx})--(\ref{eq:rkstagesx}) and, once these are available, we set $\lambda_N=\omega$,  and compute $\lambda_{N-1}$, \dots, $\lambda_0$ from (\ref{eq:rksteplambda})--(\ref{eq:rkstageslambda}) taken in the order $n =N-1, N-2, \dots,0$. For this reason, it may be advisable to rewrite (\ref{eq:rksteplambda})--(\ref{eq:rkstageslambda}) in the following \lq reflected\rq\ form (see Section~\ref{s:scherer}) that emphasises that the approximation $\lambda_n$ at $t_n$ is to be found from the approximation $\lambda_{n+1}$ at $t_{n+1}$:
\begin{eqnarray}\label{eq:rksteplambdabis}
\lambda_{n} &=& \lambda_{n+1} +(-h_n) \sum_{i=1}^s b_i \ell_{n,i}, \\
\label{eq:new5bis}
\ell_{n,i} &=& -\partial_xf(X_{n,i}, t_{n+1}+(1-c_i)(-h_n))\T \Lambda_{n,i}, \quad i=1,\dots, s,
\\
\label{eq:rkstageslambdabis}
\Lambda_{n,i} &=& \lambda_{n+1} +(- h_n) \sum_{j=1}^s (b_j-a_{ij})\ell_{n,j}, \quad i=1,\dots, s.
\end{eqnarray}

 In analogy to the continuous case, for a symplectic RK discretisation, $\nabla_\alpha{\cal C}(x_N)$ may be computed by finding $\lambda_0$ from the recursion (\ref{eq:rksteplambda})--(\ref{eq:rkstageslambda})  (or (\ref{eq:rksteplambdabis})--(\ref{eq:rkstageslambdabis})) with $\lambda_N = \nabla_x {\cal C}(x_N)$.

\subsection{The discrete problem: PRK integration}

Theorem \ref{th:adjoint} may be generalised easily with the help of Theorem \ref{th:cooper2}. Hereafter it is understood that when using the PRK scheme the $x$, $\delta$ equations are integrated with the  set of coefficients
(\ref{eq:rkabc}) (so that the $\delta_n$ are exactly the variations in $x_n$) and the $\lambda$ equations with the set of coefficients (\ref{eq:rkABC}). In other words, the system is partitioned as $q = [x\T,\delta\T]\T$, $p=\lambda$.\footnote{A variation on this theme is presented in \cite[Section 6]{sina} in the context of optimal control problem. There the $x$ equations are themselves partitioned and integrated by means of a symplectic PRK.} This approach leads to (\ref{eq:rkstepx})--(\ref{eq:rkstagesvar}) supplemented by the relations obtained by replacing the lower case coefficients $a_{ij}$, $b_i$, $c_i$ in (\ref{eq:rksteplambda})--(\ref{eq:rkstageslambda}) by their upper case counterparts:
\begin{eqnarray}\label{eq:rksteplambdater}
\lambda_{n+1} &=& \lambda_n +h_n \sum_{i=1}^s B_i \ell_{n,i}, \\
\label{eq:new5ter}
\ell_{n,i} &=& -\partial_xf(X_{n,i}, t_n+C_ih_n)\T \Lambda_{n,i}, \quad i=1,\dots, s,
\\
\label{eq:rkstageslambdater}
\Lambda_{n,i} &=& \lambda_n +h_n \sum_{j=1}^s A_{ij}\ell_{n,j}, \quad i=1,\dots, s.
\end{eqnarray}

The generalisation of Theorem \ref{th:adjoint} is:
\begin{theorem}\label{th:adjointPRK}
Assume that the $3d$-dimensional system (\ref{eq:odex})--(\ref{eq:adj}) is discretised by a {\em symplectic} PRK scheme (\ref{eq:rkabc}), (\ref{eq:rkABC}). Then
(\ref{eq:disprop}) holds for any PRK solution. In particular, for the PRK solution specified by the initial condition $x_0 = \alpha$, $\delta_0=\eta$ together with the final condition $\lambda_N=\omega$,
$$
\omega\T\delta_N = \lambda_0\T\eta.
$$
\end{theorem}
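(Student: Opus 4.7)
The plan is to exhibit this theorem as an immediate specialisation of Theorem \ref{th:cooper2}. View the combined $3d$-dimensional system (\ref{eq:odex})--(\ref{eq:adj}) in partitioned form with $q := [x\T,\delta\T]\T \in \mathbb{R}^{2d}$ and $p := \lambda \in \mathbb{R}^d$. Under this partition, the PRK scheme described in the theorem advances the $q$-block with the coefficients $(a_{ij},b_i,c_i)$, which by Theorem \ref{th:variational} reproduces exactly the $x$-update (\ref{eq:rkstepx})--(\ref{eq:rkstagesx}) together with the discrete variational equations (\ref{eq:rkstepvar})--(\ref{eq:rkstagesvar}); the $p$-block is advanced with the coefficients $(A_{ij},B_i,C_i)$, yielding (\ref{eq:rksteplambdater})--(\ref{eq:rkstageslambdater}). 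Note that the dependence structure is triangular ($x$ drives $\delta$, and $x$ drives $\lambda$), so the joint system does fit the format (\ref{eq:pode}) required by Theorem \ref{th:cooper2}.

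Next I would introduce the bilinear form $S(q,p) := p\T\delta$, i.e.\ the pairing that reads off the $\delta$-sub-block of $q$ against $p=\lambda$. This is manifestly bilinear on $\mathbb{R}^{2d}\times\mathbb{R}^d$. Proposition \ref{prop}, specifically formula (\ref{eq:ddd}), asserts that $(d/dt)\,S(q(t),p(t)) = (d/dt)[\lambda(t)\T\delta(t)] \equiv 0$ along every continuous trajectory of the augmented system, so $S$ is a bilinear first integral of the kind covered by Theorem \ref{th:cooper2}.

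By hypothesis the PRK scheme is symplectic, i.e.\ its coefficients satisfy (\ref{eq:sympcond2})--(\ref{eq:sympcond2c}); Theorem \ref{th:cooper2} therefore guarantees that $\lambda_n\T\delta_n = S(q_n,p_n)$ is independent of $n$. In particular $\lambda_N\T\delta_N = \lambda_0\T\delta_0$, which is (\ref{eq:disprop}); the displayed identity $\omega\T\delta_N = \lambda_0\T\eta$ follows at once by substituting $\lambda_N=\omega$ and $\delta_0=\eta$. I do not foresee a serious obstacle: the only mildly delicate point is recognising that $S(q,p)=p\T\delta$ fits the template of Theorem \ref{th:cooper2} (it happens to be independent of the $x$-component of $q$, which is permitted by bilinearity), and that the symplectic-PRK conditions supplied by hypothesis are exactly those required to invoke that theorem. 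All the genuine algebraic work has already been carried out once and for all in Lemma \ref{lemma} and Theorem \ref{th:cooper2}.
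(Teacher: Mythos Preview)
Your proposal is correct and follows precisely the approach the paper intends: the paper states that Theorem~\ref{th:adjoint} ``may be generalised easily with the help of Theorem~\ref{th:cooper2}'' and explicitly sets up the partition $q=[x\T,\delta\T]\T$, $p=\lambda$ just before the statement, leaving the details implicit. You have simply written those details out, invoking Proposition~\ref{prop} for the continuous conservation of $S(q,p)=\lambda\T\delta$ and then Theorem~\ref{th:cooper2} (via the symplecticness conditions (\ref{eq:sympcond2})--(\ref{eq:sympcond2c})) for its discrete preservation.
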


Once more, for a symplectic PRK discretisation, the gradient $\nabla_\alpha{\cal C}(x_N)$ coincides with $\lambda_0$  if $\lambda_N = \nabla_x {\cal C}(x_N)$. For a non-symplectic discretisation of the adjoint equations, $\lambda_0$ is a only an approximation to $\nabla_\alpha{\cal C}(x_N)$. For this reason {\em  non-symplectic PRK discretisations cannot be implied by the direct differentiation procedure} described in Section \ref{ss:navidad}.

How do we compute {\em exactly} (i.e.\ up to round-off) the sensitivity $\omega\T\delta_N$ with the help of the adjoint system when the $x$ integration has been performed with a non-symplectic RK scheme (\ref{eq:rkabc})  and Theorem \ref{th:adjoint} cannot be invoked? Theorem \ref{th:adjointPRK} suggests the way. For simplicity we only look at the case where in (\ref{eq:rkabc}) none of the weights
$b_i$, $i=1,\dots,s$, vanishes (for the general situation see the appendix). From the coefficients in (\ref{eq:rkabc}) we compute a new set
\begin{equation}\label{eq:trick}
 A_{ji} =b_i- b_ia_{ij}/b_j,\quad i,j=1,\dots, s, \quad B_i = b_i,\quad C_i=c_i\quad i= 1,\dots,s.
\end{equation}
 In view of (\ref{eq:sympcond2})--(\ref{eq:sympcond2c}), we now have a PRK scheme for the discretisation of (\ref{eq:odex})--(\ref{eq:adj}) and Theorem \ref{th:adjointPRK} applies. If (\ref{eq:rkabc}) is explicit, the computations required to descend from $\lambda_N$ to $\lambda_0$ are also explicit. Here is the simplest example. Assume that the $x$ equations are integrated with the explicit Euler rule ($s=1$,  $a_{11}= 0$, $b_1=1$, $c_1=0$). With that choice, $X_{n,1} = x_n$ and
$$
x_{n+1} = x_n+h_nf(x_n,t_n).
$$
The trick just described yields  $A_{11}=1$, $B_1=1$, $C_1=0$. Accordingly, the stage $\Lambda_{n,1}$ coincides with  $\lambda_{n+1}$ and using (\ref{eq:prkstep1}) we see that the required $\lambda$ integrator is:
\begin{equation}\label{eq:radau}
\lambda_{n+1} = \lambda_n-h_n \partial_xf(x_n,t_n)\T \lambda_{n+1}.
\end{equation}
Obviously this is {\em not} the explicit Euler rule, because $\lambda$ in the right-hand side appears at time $t_{n+1}$. And, unless the problem is autonomous, it is not the implicit Euler rule either because $t$ is evaluated at the retarded time $t_n$. (For RK enthusiasts only: the coefficients $A_{11}=1$, $B_1=1$, $C_1=0$ correspond to the Radau IA method of one stage introduced by Ehle, \cite[Section IV.5]{hw}.)

In the particular situation where the $x$ integration has been performed by  a symplectic RK method (symplectic RK methods possess non-vanishing weights \cite{ssc}, Section 8.2), the recipe (\ref{eq:trick}) will lead to $A_{ij}=a_{ij}$ and the resulting PRK method will coincide with the original RK method. In the general case, for (\ref{eq:disprop}) to hold, {\em the adjoint equations for $\lambda$ have to be integrated with coefficients  different from those used for the original equations for $x$}.

There are  hidden  difficulties with the use of this recipe. When stability is an issue, as in stiff problems or time-discretisations of partial differential equations, it is necessary to investigate carefully the stability behaviour of the $\lambda$ integration \cite{sirkes}.
On the other hand, and as noted before, the order of accuracy of the overall PRK, $x$, $\lambda$, integrator may be lower than the order of the RK method (\ref{eq:rkabc}) for $x$ we started with. When investigating the order of the overall PRK method we have to take into account that the right-hand side of (\ref{eq:odex})  is independent of $\lambda$ and the right-hand side of (\ref{eq:adj}) is linear in $\lambda$. These features imply that many elementary differentials vanish and that accordingly it is not necessary to impose the order conditions associated with them. Furthermore we have to take into account the reduction in the number of independent order conditions implied by symplecticness.

\subsection{The discrete problem: automatic differentiation}
\label{ss:navidad}

According to the preceding discussion, for any  RK integration of (\ref{eq:odex}) with nonzero weights, it is possible to find the gradient $\nabla_\alpha {\cal C}(x_N)$ by means of an integration of the adjoint equations with the coefficients (\ref{eq:trick}). It is however clear that it is also perfectly possible to compute $\nabla_\alpha {\cal C}(x_N)$ by repeatedly using the chain rule in (\ref{eq:rkstepx})--(\ref{eq:rkstagesx}), something that we shall perform presently. Since $\cal C$ is scalar and $\alpha\in \mathbb{R}^d$, where $d$ is possibly large,  reverse accumulation \cite{griewank}\footnote{Recall that the idea of reverse accumulation is as follows. Imagine an application of the chain rule that leads to a product $J_3J_2J_1$, where $J_3$ is the Jacobian matrix $\partial(z)/\partial(y)$ of the final variables $z$ with respect to some intermediate variables $y$ and similarly $J_2 =\partial(y)/\partial(x)$, $J_1= \partial(x)/\partial(w)$ ($w$ are the independent variables). When the dimension of $z$ is much lower than the dimensions of $x$, $y$ and $w$, computing  the \lq short\rq\ (few rows) matrices $K =J_3J_2$ and  $KJ_1$ (reverse accumulation) is much cheaper than first forming the \lq tall\rq\ (many rows) matrix $L=J_2J_1$ and then $J_3L$ (forward accumulation). The {\em forward} order $J_3(J_2J_1)$ finds successively the Jacobians $J_1=\partial(x)/\partial(w)$,
$J_2J_1 = \partial(y)/\partial(w)$ and $J_3J_2J_1 = \partial(z)/\partial(w)$. In {\em reverse} mode, the intermediate Jacobians are $J_3 = \partial(z)/\partial(y)$, $J_3J_2 = \partial(z)/\partial(x)$, $J_3J_2J_1=\partial(z)/\partial(w)$. The analogy with the $\delta$ and $\lambda$ equations is manifest.} is to be preferred and this may be performed with the help of Lagrange multipliers as in Section \ref{ss:lagrange}.

We shall need the following auxiliary result:
\begin{lemma}\label{l:automatic}Suppose that the mapping $\Omega:\mathbb{R}^{d+d^\prime}\rightarrow \mathbb{R}^{d^\prime}$ is such that the Jacobian matrix $\partial_\gamma \Omega$ is invertible at a point $(\alpha_0,\gamma_0)\in\mathbb{R}^d\times \mathbb{R}^{d^\prime}$,  so that in the neighborhood of $\alpha_0$, the equation $\Omega(\alpha,\gamma)=0$ defines $\gamma$ as a function of $\alpha$. Consider a real-valued function in $\mathbb{R}^d$ of the form
$\psi(\alpha) = \Psi(\alpha,\gamma(\alpha))$, for some  $\Psi:\mathbb{R}^{d+d^\prime}\rightarrow \mathbb{R}$. There exists a unique vector $\lambda_0\in \mathbb{R}^{d^\prime}$ such that
(superscripts denote components):
\begin{eqnarray*}
 \nabla_\alpha \psi |_{\alpha_0} & = &  \nabla_\alpha \Psi |_{(\alpha_0,\gamma_0)}+\sum_{r=1}^{d^\prime}\lambda_0^r  \nabla_\alpha\Omega^r  |_{(\alpha_0,\gamma_0)},\\
0 & = &  \nabla_\gamma \Psi |_{(\alpha_0,\gamma_0)}+\sum_{r=1}^{d^\prime}\lambda_0^r  \nabla_\gamma\Omega^r  |_{(\alpha_0,\gamma_0)}.
\end{eqnarray*}
\end{lemma}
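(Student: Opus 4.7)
The plan is to combine the implicit function theorem with the chain rule, using the second displayed identity to \emph{define} the multiplier $\lambda_0$ uniquely and then verifying the first identity as a consequence. First I would apply the implicit function theorem: since $\partial_\gamma \Omega$ is invertible at $(\alpha_0,\gamma_0)$, there is a smooth map $\gamma(\alpha)$ defined near $\alpha_0$, with $\gamma(\alpha_0)=\gamma_0$ and $\Omega(\alpha,\gamma(\alpha))\equiv 0$. Differentiating this identity with respect to $\alpha$ yields the Jacobian relation
\begin{equation*}
\partial_\alpha \gamma = -(\partial_\gamma \Omega)^{-1}\, \partial_\alpha \Omega,
\end{equation*}
all quantities evaluated at $(\alpha_0,\gamma_0)$.

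Next, I would rewrite the two identities the lemma claims in compact matrix form. Interpreting the $\nabla_\alpha \Omega^r$ and $\nabla_\gamma \Omega^r$ as the columns of $(\partial_\alpha \Omega)\T$ and $(\partial_\gamma \Omega)\T$ respectively, the sum $\sum_r \lambda_0^r \nabla_\gamma \Omega^r$ equals $(\partial_\gamma \Omega)\T \lambda_0$, and similarly for the $\alpha$ gradient. Thus the second claimed identity is the linear equation
\begin{equation*}
(\partial_\gamma \Omega)\T \lambda_0 \;=\; -\,\nabla_\gamma \Psi|_{(\alpha_0,\gamma_0)}.
\end{equation*}
Since $\partial_\gamma \Omega$ is invertible, so is its transpose, and this equation \emph{uniquely} determines $\lambda_0 = -(\partial_\gamma \Omega)^{-\mathsf{T}}\, \nabla_\gamma \Psi$. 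This establishes both existence and uniqueness of $\lambda_0$.

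It remains to verify the first identity with this choice of $\lambda_0$. Applying the chain rule to $\psi(\alpha) = \Psi(\alpha,\gamma(\alpha))$ and transposing gives
\begin{equation*}
\nabla_\alpha \psi |_{\alpha_0} \;=\; \nabla_\alpha \Psi |_{(\alpha_0,\gamma_0)} + (\partial_\alpha \gamma)\T\, \nabla_\gamma \Psi |_{(\alpha_0,\gamma_0)}.
\end{equation*}
Substituting the formula for $\partial_\alpha \gamma$ obtained above turns the second term into $-(\partial_\alpha \Omega)\T (\partial_\gamma \Omega)^{-\mathsf{T}}\, \nabla_\gamma \Psi$, which, by the defining equation for $\lambda_0$, equals $(\partial_\alpha \Omega)\T \lambda_0 = \sum_r \lambda_0^r \nabla_\alpha \Omega^r$. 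This is exactly the first claimed identity.

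The whole argument is essentially a two-line application of the implicit function theorem and the chain rule; the only real pitfall is keeping gradients (column vectors) and Jacobian matrices (whose rows are gradients of components) straight, so that all transposes land in the right place. Once the second identity is recognised as a square linear system for $\lambda_0$ with invertible coefficient matrix $(\partial_\gamma \Omega)\T$, existence, uniqueness, and consistency with the first identity are immediate.
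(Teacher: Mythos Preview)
Your proposal is correct and follows essentially the same approach as the paper: define $\lambda_0$ uniquely from the second identity viewed as the linear system $(\partial_\gamma\Omega)\T\lambda_0=-\nabla_\gamma\Psi$, then verify the first identity by combining the chain rule for $\psi(\alpha)=\Psi(\alpha,\gamma(\alpha))$ with the implicit differentiation of $\Omega(\alpha,\gamma(\alpha))=0$ and eliminating $\partial_\alpha\gamma$. The only cosmetic difference is that you write the explicit inverse $-(\partial_\gamma\Omega)^{-1}\partial_\alpha\Omega$ for $\partial_\alpha\gamma$, whereas the paper leaves the elimination implicit.
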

\begin{proof} The second requirement may be rewritten as
\begin{equation}\label{eq:upper}
(\partial_\gamma \Omega)\T\lambda_0 = - \nabla_\gamma \Psi,
\end{equation}
with the matrix and right-hand side evaluated at $\alpha_0$, $\gamma_0$. This is a linear system that uniquely defines $\lambda_0$.
To check that the vector $\lambda_0$ we have just found satisfies the first requirement, we use the chain rule
$$
\partial_\alpha \psi |_\alpha = \partial_\alpha \Psi |_{(\alpha,\gamma(\alpha))} + \partial_\gamma \Psi |_{(\alpha,\gamma(\alpha))} \partial_\alpha \gamma|_\alpha,
$$
differentiate  $\Omega(\alpha,\gamma(\alpha))= 0$ to get
$$
 \partial_\alpha \Omega |_{(\alpha,\gamma(\alpha))} + \partial_\gamma \Omega |_{(\alpha,\gamma(\alpha))}\partial_\alpha \gamma|_\alpha=0,
$$
evaluate at $\alpha_0$, and eliminate $\partial_\alpha \gamma|_{\alpha_0}$.
\end{proof}\medskip

It is useful to rephrase the lemma by introducing the Lagrangian
$$
 {\mathcal L}(\alpha,\gamma,\lambda) = \Psi(\alpha,\gamma)+\lambda^T \Omega(\alpha,\gamma).
$$
so that the relation $\Omega(\alpha_0,\gamma_0) = 0$ and the equation (\ref{eq:upper}) that defines the multiplier are respectively
$$
\nabla_\lambda  {\mathcal L}(\alpha,\gamma,\lambda)|_{(\alpha_0,\gamma_0,\lambda_0)} = 0,\qquad
\nabla_\gamma  {\mathcal L}(\alpha,\gamma,\lambda)|_{(\alpha_0,\gamma_0,\lambda_0)} = 0,
$$
while the gradient we seek is computed as
$$
 \nabla_\alpha \psi |_{\alpha_0}  = \nabla_\alpha  {\mathcal L}(\alpha,\gamma,\lambda)|_{(\alpha_0,\gamma_0,\lambda_0)}.
$$
Note that these developments mimic the material in Section \ref{ss:lagrange}, with $\gamma$ playing the part of $\hat x$, $\gamma_0$ the part of $x$, etc.

In numerical differentiation,  $\psi$ is the function whose gradient is to be evaluated, the components of $\alpha$ are  the independent variables, and the components of $\gamma$ represent intermediate stages towards the computation of $\psi$. (For instance, in the simple case ($d=1$) where $\psi(\alpha) = \alpha \sqrt{1+\alpha\exp(\alpha)\cos(\exp(\alpha))}$, we may set  the constraints $\Omega^1 = \gamma^1-\exp(\alpha) = 0$,
$\Omega^2 = \gamma^2- \cos(\gamma^1)= 0$, $\Omega^3 = \gamma^3-\alpha\gamma^1\gamma^2=0$, $\Omega^4 =\gamma^4-\sqrt{1+\gamma^3} $, $\psi =\alpha\gamma^4$.) The interpretation of the $\gamma^r$ as successive stages implies that, in practice, $\Omega$ will possess a lower triangular structure: $\Omega^r$ will only involve $\gamma^1$,\dots,$\gamma^r$.
The evaluation of $\psi$ successively finds the numerical values of $\gamma^1$,\dots,$\gamma^{d^\prime}$ in a forward fashion. The numerical values of the components $\lambda_0^r$, are then found by {\em backward} substitution in the upper-triangular linear system (\ref{eq:upper}) and finally the lemma yields the required value of the gradient. If $\Psi$ and $\Omega$ have been judiciously chosen, then the mappings $\nabla_\alpha \Psi$, $\nabla_\gamma \Psi$, $\nabla_\alpha \Omega^r$, $\nabla_\alpha \Omega^r$ required to compute the gradient will
  have simple analytic expressions, easily derived by a human or by a computer programme.

We now  apply this technique to find $\nabla_\alpha {\cal C}(x_N)$. In (\ref{eq:rkstepx})--(\ref{eq:rkstagesx}) we let (the components of) $x_n$, $n= 0,...,N$, and ${k}_{n,i}$,  $n=0,\dots,N-1$, $i=1,\dots,s$, play the role of (the components of) $\gamma$ and introduce the Lagrangian
\begin{eqnarray}\nonumber
& {\cal C}(x_N) - \lambda_0\T(x_0-\alpha)-\sum_{n=0}^{N-1} h_n\lambda_{n+1}\T \Big[\frac{1}{h_n}(x_{n+1} - x_n) - \sum_{i=1}^s b_i {k}_{n,i}\Big]\\
& {} - \sum_{n=0}^{N-1} h_n\sum_{i=1}^s b_i \Lambda_{n,i}\T\Big[ k_{n,i} -f(X_{n,i}, t_n+c_ih_n)\Big],\label{eq:hat}
\end{eqnarray}
where we understand that the stage vectors $X_{n,i}$ have been expressed in terms of the $x_n$ and $k_{n,i}$ by means of (\ref{eq:rkstagesx}). Clearly this discrete Lagrangian  is the natural RK approximation to (\ref{eq:friday}).

A straightforward application of Lemma \ref{l:automatic} now directly yields the following result, where  we note that the hypothesis $b_i \neq 0$, $i=1,\dots,s$, is natural because, when, say,  $b_1=0$,  the Lagrangian (\ref{eq:hat}) does not incorporate the constraint $k_{n,1} = f(X_{n,1}, t_n+c_1h_n)$. (The case of zero weights is considered in the appendix.)
\begin{theorem}\label{th:automatica}  Consider the RK equations (\ref{eq:rkstepx})--(\ref{eq:rkstagesx}), with $b_i \neq 0$, $i=1,\dots,s$. The computation of $\nabla_\alpha{\cal C}(x_N)$ based on the use of Lemma \ref{l:automatic} with Lagrangian (\ref{eq:hat}) leads to the relations (\ref{eq:rksteplambdater})--(\ref{eq:rkstageslambdater}), with the coefficients $A_{ij}$, $B_i$, $C_i$ given by (\ref{eq:trick}), together with $\nabla_x{\cal C}(x_N)=\lambda_N$, $\nabla_\alpha{\cal C}(x_N)=\lambda_0$.
\end{theorem}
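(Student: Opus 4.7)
The plan is to apply Lemma~\ref{l:automatic} directly to the Lagrangian (\ref{eq:hat}), treating $\alpha$ as the independent variable and taking as the intermediate vector $\gamma$ the concatenation of $x_0,x_1,\dots,x_N$ and $k_{n,i}$ for $n=0,\dots,N-1$, $i=1,\dots,s$. The constraints $\Omega=0$ are the RK equations: the initial condition $x_0=\alpha$, the update $(x_{n+1}-x_n)/h_n=\sum_i b_i k_{n,i}$ and the slope definitions $k_{n,i}=f(X_{n,i},t_n+c_ih_n)$ (with $X_{n,i}$ eliminated via (\ref{eq:rkstagesx})). The multipliers associated with these constraints are, in order, $\lambda_0$, $\lambda_{n+1}$, and $b_i\Lambda_{n,i}$ (the factor $h_n b_i$ in front of the stage constraint is a convenient normalisation which is legitimate precisely because $b_i\neq 0$). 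The main work consists in computing $\nabla_{x_n}\mathcal{L}$ and $\nabla_{k_{n,i}}\mathcal{L}$ explicitly, setting each to zero, and identifying the resulting algebra with (\ref{eq:rksteplambdater})--(\ref{eq:rkstageslambdater}) for the coefficients~(\ref{eq:trick}).

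First I would extract $\lambda_N$ from the boundary condition. Only $\mathcal{C}(x_N)$ and the term with $\lambda_N^T(x_N-x_{N-1})$ depend on $x_N$, so $\nabla_{x_N}\mathcal{L}=0$ gives $\lambda_N=\nabla_x\mathcal{C}(x_N)$. Next, for $1\leq n\leq N-1$, the variable $x_n$ appears in two Lagrangian terms (contributing $\lambda_{n+1}-\lambda_n$) and, via $X_{n,i}=x_n+h_n\sum_j a_{ij}k_{n,j}$, in the stage constraints; the chain rule yields
\begin{equation*}
\lambda_{n+1}-\lambda_n = -h_n\sum_i b_i\,\partial_x f(X_{n,i},t_n+c_ih_n)\T\Lambda_{n,i} = h_n\sum_i b_i\ell_{n,i}.
\end{equation*}
This is exactly (\ref{eq:rksteplambdater}) with $B_i=b_i$, confirming the $B$-coefficient and the $C$-abscissa in (\ref{eq:trick}).

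The step requiring the most care is the stage relation obtained from $\nabla_{k_{n,i}}\mathcal{L}=0$. Here $k_{n,i}$ appears explicitly (with multipliers $\lambda_{n+1}$ and $\Lambda_{n,i}$) and, through the off-diagonal dependence $\partial X_{n,j}/\partial k_{n,i}=h_n a_{ji}I$, inside every stage constraint for index $j$. Collecting terms and dividing by $h_n b_i$ (again using $b_i\neq 0$) gives
\begin{equation*}
\Lambda_{n,i} = \lambda_{n+1} - h_n\sum_j\frac{b_j a_{ji}}{b_i}\ell_{n,j}.
\end{equation*}
This is written in terms of $\lambda_{n+1}$, whereas (\ref{eq:rkstageslambdater}) is expressed in terms of $\lambda_n$. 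I would then eliminate $\lambda_{n+1}$ using the update $\lambda_{n+1}=\lambda_n+h_n\sum_j b_j\ell_{n,j}$ derived above, producing
\begin{equation*}
\Lambda_{n,i} = \lambda_n + h_n\sum_j\Bigl(b_j-\frac{b_j a_{ji}}{b_i}\Bigr)\ell_{n,j},
\end{equation*}
so that $A_{ij}=b_j-b_j a_{ji}/b_i$; after swapping $i\leftrightarrow j$ this coincides with the formula $A_{ji}=b_i-b_i a_{ij}/b_j$ in (\ref{eq:trick}). This index reshuffle is the main potential source of confusion and is the key algebraic content of the theorem.

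To finish, I would check consistency at the two endpoints: setting $\nabla_{x_0}\mathcal{L}=0$ reproduces the same update rule as for interior $x_n$ (confirming $\lambda_0$ as the \lq\lq initial\rq\rq\ value of the backward recursion), and $\nabla_\alpha\mathcal{L}=\lambda_0$ follows immediately since only the term $-\lambda_0\T(x_0-\alpha)$ depends on $\alpha$. Lemma~\ref{l:automatic} then yields $\nabla_\alpha\mathcal{C}(x_N)=\lambda_0$, completing the identification of the reverse-mode procedure with the PRK adjoint integrator specified by (\ref{eq:trick}).
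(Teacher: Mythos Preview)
Your proposal is correct and follows exactly the approach the paper intends: the paper merely states that the theorem is a ``straightforward application of Lemma~\ref{l:automatic}'' to the Lagrangian~(\ref{eq:hat}) and omits the details, and you have supplied precisely those details, including the key step of dividing by $h_nb_i$ (which explains the hypothesis $b_i\neq0$) and the elimination of $\lambda_{n+1}$ to recover the $A_{ij}$ in the form~(\ref{eq:trick}).
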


Note that, in the situation of the theorem, $\lambda_N$, $\lambda_{N-1}$, $\lambda_{N-2}$, \dots successively yield the gradients $\nabla_{x_N}{\cal C}(x_N)$, $\nabla_{x_{N-1}}{\cal C}(x_N)$, $\nabla_{x_{N-2}}{\cal C}(x_N)$, \dots\
It is well known that the reverse mode of differentiation implies an integration of the adjoint equations. The theorem shows additionally that, for an RK computation of $x$, the implied adjoint equation integration is such that the $x$, $\lambda$ system is discretised with a {\em symplectic} PRK method. Recall that we showed in the preceding subsection that nonsymplectic PRK cannot appear in this setting as they do not find exactly  $\nabla_\alpha{\cal C}(x_N)$. In a way the chain rule  provided us with  symplectic integration {\em before the latter was invented}.

A further remark: the use of the chain rule with forward accumulation implies an RK integration of the variational equations (\ref{eq:var}) with the original RK coefficients (\ref{eq:rkabc}). In agreement with a previous discussion, the forward mode is more expensive; each partial derivative $\partial/\partial\alpha^r$, $r = 1,\dots,d$, in the gradient requires a separate integration.

\section{A simple optimal control problem}
\label{sec:control}
We  explore next the role of symplectic methods when integrating the differential equations that arise in some optimal control problems \cite{sontag}, \cite{trelat}, \cite{zabczyk}. In this section we look at the simplest case, where the developments are very similar to those just considered; more general problems are treated in the next.
\subsection{The continuous problem}
Consider now the $d$-dimensional system
\begin{equation}\label{eq:odecontrol}
\frac{d}{dt}x = f(x,u,t),
\end{equation}
where $x$ is the state vector and $u$ a $\nu$-dimensional vector of controls. Our aim is to find functions $x(t)$ and $u(t)$, subject to (\ref{eq:odecontrol}) and the initial condition $x(t_0) = \alpha\in\mathbb{R}^d$, so as to minimise a given cost function
${\cal C}(x(t_0+T))$.

The variational equation is (cf.\ (\ref{eq:var}))
\begin{equation}\label{eq:varcontrol}
\frac{d}{dt} \delta = \partial_x f(x(t),u(t),t)\, \delta+\partial_uf(x(t),u(t),t)\, \zeta,
\end{equation}
where $\partial_u$ is the Jacobian matrix of $f$ with respect to $u$ and $\zeta$ denotes the variation in $u$, see e.g. \cite[Section 2.8]{sontag}, \cite[Section 5.1]{trelat}. Now $\delta(t_0) =0$, as $x(t_0)$ remains nailed down at $\alpha$.

An adjoint system (cf.\ (\ref{eq:adj}))
\begin{equation}\label{eq:controladjoint}
\frac{d}{dt} \lambda = - \partial_x f(x(t),u(t),t)\T\, \lambda,
\end{equation}
and constraints
\begin{equation}\label{eq:constraints}
\partial_u f(x(t),u(t),t)\T \lambda(t) =0,
\end{equation}
are introduced, see e.g.\ \cite[Section 9.2]{sontag}. As  was the case with the adjoint in (\ref{eq:adj}), the actual form of these equations is chosen to ensure the validity of the conservation property (\ref{eq:prop}). More precisely we have the following result:

\begin{proposition}\label{propcontrol}
For each choice of vectors $x$, $u$, $\delta$, $\zeta$, $\lambda$ and real $t$:
\begin{equation}\label{eq:identity2}
\Big( - \partial_x f(x,u,t)\T\, \lambda\Big)\T\delta +
\lambda\T \Big(\partial_xf(x,u,t)\delta+\partial_u f(x,u) \zeta\Big)=0.
\end{equation}
Therefore if $\delta(t)$, $\lambda(t)$, $\zeta(t)$ satisfy (\ref{eq:varcontrol})--(\ref{eq:constraints}), then (\ref{eq:ddd})--(\ref{eq:prop}) hold.
\end{proposition}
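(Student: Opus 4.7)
My plan is to follow the pattern of Proposition \ref{prop}: first verify the pointwise algebraic identity (\ref{eq:identity2}), and then integrate it to obtain the conservation of $\lambda\T\delta$.

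For the identity itself, elementary transpose algebra gives $(-\partial_x f(x,u,t)\T \lambda)\T = -\lambda\T \partial_x f(x,u,t)$, so the first summand of (\ref{eq:identity2}) reads $-\lambda\T \partial_x f\, \delta$. This exactly cancels the $\lambda\T \partial_x f\, \delta$ piece hidden inside the second summand, and what survives is $\lambda\T \partial_u f(x,u,t)\, \zeta$. The key observation---this is where the argument genuinely departs from the proof of Proposition \ref{prop}---is that this leftover term is annihilated by the control constraint (\ref{eq:constraints}): the relation $\partial_u f(x,u,t)\T \lambda = 0$ is equivalent to $\lambda\T \partial_u f(x,u,t) = 0$, so $\lambda\T \partial_u f\, \zeta$ vanishes for arbitrary $\zeta$. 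Hence (\ref{eq:identity2}) holds whenever the triple $(x,u,\lambda)$ satisfies (\ref{eq:constraints}), for any $\delta$ and $\zeta$.

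To derive (\ref{eq:ddd}), I would differentiate $\lambda(t)\T\delta(t)$, substitute $(d/dt)\lambda$ from the adjoint equation (\ref{eq:controladjoint}) and $(d/dt)\delta$ from the variational equation (\ref{eq:varcontrol}), and then recognise the resulting expression as the left-hand side of (\ref{eq:identity2}) evaluated at $(x(t), u(t), \delta(t), \zeta(t), \lambda(t), t)$. Because (\ref{eq:constraints}) is assumed to hold along the trajectory, the identity applies and yields $(d/dt)\big(\lambda(t)\T \delta(t)\big) \equiv 0$; integrating from $t_0$ to $t_0+T$ then gives (\ref{eq:prop}).

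There is no deep obstacle here---the whole argument is a short exercise in transposes and the product rule---but the one point that deserves flagging is precisely the structural role of the constraint (\ref{eq:constraints}): it is not merely an ancillary optimality condition but the very reason the adjoint system as written achieves conservation of $\lambda\T\delta$. Without it, the surviving term $\lambda\T \partial_u f\, \zeta$ would persist, the derivative $(d/dt)(\lambda\T\delta)$ would not vanish, and the quadratic invariant that drives the whole paper would be lost.
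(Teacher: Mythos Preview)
Your proof is correct and follows the same elementary approach the paper has in mind (the proposition, like Proposition~\ref{prop}, is stated without proof). You have in fact been more careful than the paper's wording: the identity (\ref{eq:identity2}) as literally stated---``for each choice of vectors''---does not hold unconditionally, since the transpose cancellation leaves the residual term $\lambda\T \partial_u f\,\zeta$, which vanishes only when the constraint (\ref{eq:constraints}) is in force. Your remark that this constraint is the structural ingredient that makes conservation work is exactly right; the paper's own later use of (\ref{eq:identity2}) in the proof of Theorem~\ref{th:main} tacitly relies on the discrete constraint (\ref{eq:aug5}) in precisely the same way.
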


The use of the proposition is as follows. We solve the two-point boundary problem given by the states+costates system (\ref{eq:odecontrol}), (\ref{eq:controladjoint})--(\ref{eq:constraints}) with initial/final  conditions
\begin{equation}\label{eq:conditions}
x(t_0) = \alpha,\qquad \lambda(t_0+T) = \nabla {\cal C}(x(t_0+T)).
\end{equation}
 Then, the variation $\delta(t_0+T)$ at the end of the interval is orthogonal to the gradient of the cost since, from (\ref{eq:prop}),
\begin{equation}\label{eq:ander}
\nabla {\cal C}(x(t_0+T))\T \delta(t_0+T) = \lambda(t_0+T)\T\delta(t_0+T) = \lambda(t_0)\T\delta(t_0) =0.
\end{equation}
 This of course means that any  solution $[x(t)\T, \lambda(t)\T, u(t)\T]\T$ of the boundary-value problem satisfies the first-order necessary condition for $\cal C$ to attain a minimum. As in sensitivity analyses, the costates $\lambda$ may be interpreted as {\em Lagrange multipliers}.

It is customary to introduce the function $H(x,\lambda,u,t) = \lambda\T f(x,u,t)$ (pseudo-Hamilton\-ian) so that (\ref{eq:odecontrol}), (\ref{eq:controladjoint})--(\ref{eq:constraints}) take the very symmetric form
\begin{equation}\label{eq:pseudo}
\frac{d}{dt} x = \nabla_\lambda H,\quad \frac{d}{dt} \lambda = -\nabla_x H, \quad \nabla_u H =0.
\end{equation}

\subsection{The discrete problem: indirect approach}
In the indirect approach, approximations to the optimal states, costates and controls are obtained by discretisation of the boundary value problem  (\ref{eq:odecontrol}), (\ref{eq:controladjoint})--(\ref{eq:constraints}), (\ref{eq:conditions}). Note that we have to tackle  a {\em differential-algebraic} system \cite[Chapter VI.1]{hw}, with the controls being algebraic variables as
$(d/dt)u$ does not feature in any of the equations (\ref{eq:odecontrol}), (\ref{eq:controladjoint})--(\ref{eq:constraints}). Under suitable technical assumptions (invertibility of the second derivative of $H$ with respect to $u$), the  system  is of {\em index one}. This means that the constraints (\ref{eq:constraints}) may be used to express, locally around the solution of interest, the algebraic variables as functions of the differential variables,  $u = \Phi(x,\lambda,t)$. (When applying the implicit function theorem, the relevant Jacobian matrix is the Hessian $\partial_{uu} H$ and this will generically be  positive definite,  if Pontryagin's  principle  \cite[Section 7.2]{trelat} holds so that $H(x,\lambda,\cdot,t)$ is minimised by $\Phi(x,\lambda,t)$.) For a system of index one we may  think that  the right-hand sides of
(\ref{eq:odecontrol}) and (\ref{eq:controladjoint}) have been written as functions of $x$, $\lambda$ and $t$ by setting  $u = \Phi(x,\lambda,t)$, thus transforming the differential-algebraic system into a  system of ordinary differential equations. In fact the transformed system is the canonical Hamiltonian system with Hamiltonian function ${\cal H}(x,\lambda,t) = H(x,\lambda,\Phi(x,\lambda,t),t)$, because the chain rule and
$\nabla_u H=0$ imply that, in (\ref{eq:pseudo}),
$\nabla_x H(x,\lambda,u,t) = \nabla_x {\cal H}(x,\lambda,t)$ and $\nabla_x H(x,\lambda,u,t) = \nabla_x {\cal H}(x,\lambda,t)$.
 This Hamiltonian system may be discretised with the PRK scheme (\ref{eq:rkabc}), (\ref{eq:rkABC}). (Recall that RK schemes are included as particular cases where both sets of coefficients just coincide.) The discrete equations are solved to find the approximations $x_n$ and $\lambda_n$ to $x(t_n)$, $\lambda(t_n)$ and finally the approximations to the controls are retrieved as
$u_n=\Phi(x_n,\lambda_n,t_n)$.

The analytic expression of the implicit function $\Phi$ will in general not be available, so that it will not be possible to find $\cal H$ explicitly. This is not a hindrance: the approximations $x_n$, $\lambda_n$, $u_n$ that one would get by a PRK integration of the Hamiltonian system may  be found in practice as solutions of the  set of
equations (\ref{eq:aug1})--(\ref{eq:aug6})  below, obtained by direct discretisation of the differential-algebraic format (\ref{eq:odecontrol}), (\ref{eq:controladjoint})--(\ref{eq:constraints}). The equivalence between the two approaches, differential and differential-algebraic
is seen by eliminating the controls from (\ref{eq:aug1})--(\ref{eq:aug6}), see \cite[Chapter VI.1]{hw}.

The discrete equations are ($n=0,\dots,N-1$):
\begin{eqnarray}
\label{eq:aug1}
&&x_{n+1} = x_n +h_n \sum_{i=1}^s b_i{k}_{n,i} , \\
\label{eq:n10}
&&{k}_{n,i} = f(X_{n,i},U_{n,i}, t_n+c_ih_n), \quad i=1,\dots, s,\\
\label{eq:aug2}
&&X_{n,i} = x_n + h_n \sum_{j=1}^s a_{ij} {k}_{n,j}, \quad i=1,\dots, s,\\
\label{eq:aug3}
&&\lambda_{n+1} = \lambda_n +h_n \sum_{i=1}^s B_i \ell_{n,i}, \\
\label{eq:n11}
&&\ell_{n,i} = -\partial_xf(X_{n,i},U_{n,i}, t_n+C_ih_n)\T \Lambda_{n,i}, \quad i=1,\dots, s,\\
\label{eq:aug4}
&&\Lambda_{n,i} = \lambda_n + h_n \sum_{j=1}^s A_{ij}\ell_{n,j}, \quad i=1,\dots, s,\\
\label{eq:aug5}
&&\partial_u f(X_{n,i},U_{n,i}, t_n+C_ih_n)\T \Lambda_{n,i} = 0, \quad i=1,\dots, s,
\end{eqnarray}
together with  ($n=0,\dots,N$)
\begin{equation}\label{eq:aug6}
\partial_u f(x_{n},u_{n},t_n)\T \lambda_n = 0,
\end{equation}
 and the boundary conditions $x_0= \alpha$, $\lambda_N=\nabla C(x_N)$ from (\ref{eq:conditions}).

What is the accuracy of this technique? We encounter the same difficulty we found in the preceding section: relevant here is the order of the overall PRK scheme rather than  the (possibly higher) order of the RK coefficients (\ref{eq:rkabc}) used for the state variables.
In the preceding section the approximations $x_n$ are found independently of the $\lambda_n$ and, accordingly, the possible order reduction does not affect them. In the optimal control problem,  states and costates are coupled and any order reduction will harm both of them. This was first noted by Hager  who also provided relevant counterexamples, see \cite[Table 3]{hager}. Hager (Proposition 6.1) also shows that there is no order reduction for explicit, fourth order RK schemes with positive weights.

 The obvious analogue of Theorem \ref{th:variational} holds: the variations $\delta_n$ in the discrete solution $x_n$ satisfy the equations that result from discretising (\ref{eq:varcontrol}) with the coefficients (\ref{eq:rkabc}). These equations are (\ref{eq:rkstepvar}) and (\ref{eq:rkstagesvar}) where now
 \begin{equation}\label{eq:sabado}
  k_{n,i} =\partial_x f(X_{n,i},U_{n,i},t_n+c_ih_n)\, \Delta_{n,i} + \partial_u f(X_{n,i},U_{n,i},t_n+c_ih_n)\, Z_{n,i},
 \end{equation}
 ($\Delta_{n,i}$, $Z_{n,i}$ are the stages associated with the variables $\delta$ and $\zeta$).

 Assume next that the PRK is {\em symplectic}. Recall that symplecticness may be the result of choosing the RK coefficients (\ref{eq:rkabc}) ($b_i\neq 0$, $i=1,\dots,s$) for the state variables and retrieving from (\ref{eq:trick}) the coefficients (\ref{eq:rkABC}) for the integration of the adjoint system. The symplecticness of the integrator  makes it possible to formulate a discrete analogue of Proposition \ref{propcontrol}.

 \begin{theorem}\label{th:main} Assume that $x_n$, $\lambda_n$, $u_n$, $n=0,\dots,N$, satisfy the equations (\ref{eq:aug1})--(\ref{eq:aug6}) arising from the application of a {\em symplectic} PRK method  and that, furthermore, $\delta_n$, $n=0,\dots,N$, $\delta_0=0$, are the variations in $x_n$.   Then, for $n=0,\dots,N-1$,
 $$
 \lambda_{n+1}\T\delta_{n+1} = \lambda_n\T\delta_n.
 $$
The PRK scheme may be  a symplectic RK scheme or the result of choosing freely the RK coefficients (\ref{eq:rkabc}), $b_i\neq 0$, $i=1,\dots,s$, for the states and then using (\ref{eq:trick}) to determine the coefficients for the integration of the costates.
 \end{theorem}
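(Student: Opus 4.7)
The plan is to apply Lemma \ref{lemma} with the bilinear form $S(\delta,\lambda)=\lambda\T\delta$, regarding $(\delta,\lambda)$ as a partitioned vector integrated by the given symplectic PRK scheme. The desired identity will then reduce to a stage-by-stage cancellation produced by Proposition \ref{propcontrol} together with the algebraic stage constraint (\ref{eq:aug5}).

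First I would identify the PRK-format data. Linearising the state equations (\ref{eq:aug1})--(\ref{eq:aug2}) around $x_n$, $X_{n,i}$, $U_{n,i}$ (the optimal-control counterpart of Theorem \ref{th:variational}), the variations $\delta_n$ of $x_n$ satisfy (\ref{eq:rkstepvar})--(\ref{eq:rkstagesvar}) with slopes $d_{n,i}$ given by (\ref{eq:sabado}); here $\Delta_{n,i}$ and $Z_{n,i}$ are the variations of the stages $X_{n,i}$ and $U_{n,i}$. The costates $\lambda_n$, $\Lambda_{n,i}$ and slopes $\ell_{n,i}$ obey (\ref{eq:aug3})--(\ref{eq:aug4}) with the upper-case coefficients (\ref{eq:rkABC}). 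With the assignment $q=\delta$, $p=\lambda$, these data fit exactly the format (\ref{eq:prkstep1})--(\ref{eq:prkstages1}), and symplecticness of the PRK supplies (\ref{eq:sympcond2}). Lemma \ref{lemma} then yields
\begin{equation*}
\lambda_{n+1}\T\delta_{n+1} - \lambda_n\T\delta_n \;=\; h_n \sum_{i=1}^{s} b_i\bigl(\Lambda_{n,i}\T d_{n,i} + \ell_{n,i}\T \Delta_{n,i}\bigr).
\end{equation*}

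The second step is to show that each summand vanishes. Symplecticness also provides $c_i=C_i$ via (\ref{eq:sympcond2c}), so the Jacobians appearing in (\ref{eq:sabado}) and in (\ref{eq:n11}) share the common argument $(X_{n,i},U_{n,i},t_n+c_ih_n)$. Substituting these expressions into the bracket, the two contributions involving $\partial_x f$ cancel by the pointwise identity (\ref{eq:identity2}) of Proposition \ref{propcontrol}, leaving
\[
\Lambda_{n,i}\T\,\partial_u f(X_{n,i},U_{n,i},t_n+c_ih_n)\,Z_{n,i} \;=\; \bigl(\partial_u f(X_{n,i},U_{n,i},t_n+c_ih_n)\T\Lambda_{n,i}\bigr)\T Z_{n,i},
\]
which is zero by the algebraic stage constraint (\ref{eq:aug5}). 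This proves the per-step conservation $\lambda_{n+1}\T\delta_{n+1}=\lambda_n\T\delta_n$.

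The main subtlety is precisely to keep all Jacobians aligned in their evaluation points: both halves of the symplecticness conditions (\ref{eq:sympcond2}) and (\ref{eq:sympcond2c}) play distinct and essential roles, the first producing the algebraic identity of Lemma \ref{lemma} and the second guaranteeing that the $\partial_x f$ and $\partial_u f$ factors in $d_{n,i}$, in $\ell_{n,i}$ and in the constraint (\ref{eq:aug5}) are all evaluated at the same $(X_{n,i},U_{n,i},t_n+c_ih_n)$. No further argument is needed for the closing sentence of the theorem: the scheme built from (\ref{eq:rkabc}) through the recipe (\ref{eq:trick}) has already been shown, in the discussion following Theorem \ref{th:adjointPRK}, to be a symplectic PRK method, and therefore falls under the hypothesis of what has just been proved.
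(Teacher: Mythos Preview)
Your proof is correct and follows the same route as the paper: apply Lemma \ref{lemma} with $S(\delta,\lambda)=\lambda\T\delta$ to obtain the stage-wise sum, then use the identity of Proposition \ref{propcontrol} to kill each summand. If anything, you are more explicit than the paper, which simply cites (\ref{eq:identity2}) for the cancellation; you correctly separate the algebraic cancellation of the $\partial_x f$ contributions from the vanishing of the $\partial_u f$ contribution via the stage constraint (\ref{eq:aug5}), and you spell out the role of (\ref{eq:sympcond2c}) in aligning the evaluation points.
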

 \begin{proof} Use Lemma \ref{lemma}
with $S(q,p) = \lambda\T\delta$.
This results in
$$
 \lambda_{n+1}\T\delta_{n+1} - \lambda_n\T\delta_n = h_n \sum_i b_i (\Lambda_{n,i}\T k_{n,i}+\ell_{n,i}\T \Delta_{n,i})
 $$
 where $k_{n,i}$ and $\ell_{n,i}$ come from (\ref{eq:sabado}) and (\ref{eq:n11}) respectively.
According to (\ref{eq:identity2}), each of the terms being summed vanishes.
\end{proof}\medskip

When the boundary conditions (\ref{eq:conditions}) are imposed,
 $$
 \nabla {\cal C}(x_N)\T\delta_N = \lambda_N\T\delta_N= \lambda_0\T\delta_0 = 0,
 $$
 which means that the discrete solution satisfies the  first-order necessary conditions for ${\cal C}(x_N)$ to achieve a minimum subject to the constraints (\ref{eq:aug1})--(\ref{eq:aug2}) and $x_0=\alpha$. In this way we have proved that {\em symplectic discretisation} commutes \cite{ross} with the process of forming necessary conditions for minimisation:
 \begin{theorem} \label{th:main2} Let  $\{x_n\}$, $\{\lambda_n\}$, $\{u_n\}$ be a solution of the equations (\ref{eq:aug1})--(\ref{eq:aug6}) arising from discretising with a {\em symplectic} PRK integrator the necessary conditions for the continuous optimal control problem. Then $\{x_n\}$, $\{\lambda_n\}$, $\{u_n\}$  satisfies the necessary conditions for ${\cal C}(x_N)$ to  achieve a minimum subject to the discrete constraints
  (\ref{eq:aug1})--(\ref{eq:aug2}) and $x_0=\alpha$. The PRK scheme may be  a symplectic RK scheme or the result of choosing freely the RK coefficients (\ref{eq:rkabc}), $b_i\neq 0$, $i=1,\dots,s$, for the states and then using (\ref{eq:trick}) to determine the coefficients for the integration of the costates.
 \end{theorem}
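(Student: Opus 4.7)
The plan is to derive Theorem \ref{th:main2} as an essentially immediate corollary of Theorem \ref{th:main} once the discrete first-order necessary conditions are correctly identified. I would first characterise what it means for $\{x_n\},\{\lambda_n\},\{u_n\}$ to satisfy the necessary conditions for ${\cal C}(x_N)$ to be minimised subject to (\ref{eq:aug1})--(\ref{eq:aug2}), (\ref{eq:n10}) and $x_0=\alpha$: this is the statement that $\nabla {\cal C}(x_N)\T\delta_N=0$ for every admissible infinitesimal perturbation $\{\delta_n,\Delta_{n,i},d_{n,i},Z_{n,i}\}$ of the nominal trajectory, where $\delta_0=0$ and the quantities satisfy the linearised constraints (\ref{eq:rkstepvar}), (\ref{eq:rkstagesvar}) with slope variations $k_{n,i}$ of the form (\ref{eq:sabado}). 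Equivalently, applying Lemma \ref{l:automatic} to a discrete Lagrangian patterned on (\ref{eq:hat}) but with the controls retained as independent variables, stationarity gives back a system of the shape (\ref{eq:aug3})--(\ref{eq:aug6}) together with the transversality relation $\lambda_N=\nabla {\cal C}(x_N)$ and $\delta_0=0$.

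The core step is then to invoke Theorem \ref{th:main}: because the PRK pair is symplectic and because the algebraic constraints (\ref{eq:aug5})--(\ref{eq:aug6}) hold, the quadratic quantity $\lambda_n\T\delta_n$ is conserved along any admissible variation. Telescoping from $n=0$ to $n=N$ and using $\delta_0=0$ (a consequence of $x_0=\alpha$ being prescribed) yields $\lambda_N\T\delta_N=\lambda_0\T\delta_0=0$. Substituting the transversality condition $\lambda_N=\nabla {\cal C}(x_N)$ produces $\nabla {\cal C}(x_N)\T\delta_N=0$ for all admissible $\delta_N$, which is the required discrete first-order condition. This argument is exactly the one sketched in the paragraph preceding the theorem statement, now packaged as a proof.

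The main obstacle, more bookkeeping than a genuine mathematical difficulty, will be to confirm that the multipliers arising naturally from the discrete Lagrangian of the constrained minimisation problem coincide with the discrete costates $\lambda_n$ and stages $\Lambda_{n,i}$ produced by the PRK discretisation (\ref{eq:aug3})--(\ref{eq:aug4}) of the continuous adjoint system. This identification is valid precisely when the RK coefficient pair satisfies (\ref{eq:sympcond2})--(\ref{eq:sympcond2c}); indeed, this is why formula (\ref{eq:trick}) is the minimal recipe enforcing commutation when one starts from a non-symplectic scheme for the states, as already noted in the preceding subsection. Once this alignment is taken on board, the symplecticness hypothesis does all the work, and the conservation law of Theorem \ref{th:main} finishes the proof without further computation. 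I would close by remarking that the converse direction---that satisfying the discrete necessary conditions forces $\{x_n\},\{\lambda_n\},\{u_n\}$ to obey (\ref{eq:aug1})--(\ref{eq:aug6})---follows from the same discrete Lagrangian viewpoint, so that indirect and direct approaches coincide, as emphasised in the surrounding discussion.
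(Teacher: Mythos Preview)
Your proposal is correct and follows essentially the same route as the paper: invoke Theorem \ref{th:main} to obtain $\lambda_N\T\delta_N=\lambda_0\T\delta_0$, combine with $\delta_0=0$ and the final condition $\lambda_N=\nabla{\cal C}(x_N)$, and read off $\nabla{\cal C}(x_N)\T\delta_N=0$. The paper's own proof is just the displayed chain of equalities immediately preceding the theorem statement; your additional discussion of the discrete Lagrangian and multiplier identification anticipates Theorem~\ref{th:direct} but is not needed here.
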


 When the states+costates system is integrated by means of a non-symplectic PRK, $x_N$ will  not satisfy the necessary conditions for  $\cal C$ to be minimised subject to  the  constraints (\ref{eq:aug1})--(\ref{eq:aug2}) and $x_0=\alpha$. Therefore non-symplectric PRK discretisations {\em cannot} be obtained via the direct approach considered next.
\subsection{The discrete problem: direct approach}
The direct approach (see e.g. \cite[Chapter 9]{trelat}) based on RK discretisation begins by applying the  scheme (\ref{eq:rkabc})
to the differential equation (\ref{eq:odecontrol})  to get (\ref{eq:aug1})--(\ref{eq:aug2}). Then, these equations and  $x_0=\alpha$ are seen as constraints of a finite-dimensional optimisation problem for the minimisation of ${\cal C}(x_N)$.

We use the standard method of Lagrange multipliers based on  the Lagrangian in (\ref{eq:hat}), trivially adapted to the present circumstances by letting $f$ depend on the controls.
The method leads in a straightforward way to the following result, first proved by
 Hager \cite{hager}, see also \cite{numer}. However \cite{hager} does not point out  that the relations (\ref{eq:trick}) correspond to symplecticness. Furthermore \cite{hager} and \cite{numer} do not use a discrete Lagrangian obtained by discretisation of the continuous Lagrangian. These papers and \cite{vilmart} do not point out that the occurrence of symplectic schemes in this context is really due to the conservation property (\ref{eq:prop}).

\begin{theorem}\label{th:direct}
The first-order necessary conditions for the  minimisation of ${\cal C}(x_N)$ subject to $x_0=\alpha$ and (\ref{eq:aug1})--(\ref{eq:aug2}), $b_i\neq 0$, $i=1,\dots,s$,
are $x_0= \alpha$, $\nabla {\cal C}(x_N) = \lambda_N$  together with (\ref{eq:aug1})--(\ref{eq:aug5}), with the coefficients $A_{ij}$, $B_i$, $C_i$ given by (\ref{eq:trick}).
\end{theorem}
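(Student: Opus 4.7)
The plan is to set up the standard Lagrange multiplier machinery for the finite-dimensional constrained minimisation problem and to match the resulting stationarity conditions with the prescribed scheme. I would use the discrete Lagrangian (\ref{eq:hat}), adapted so that $f=f(X_{n,i},U_{n,i},t_n+c_ih_n)$ and the controls $U_{n,i}$ appear as primal variables alongside the states $x_n$ and the slopes $k_{n,i}$; the stages $X_{n,i}$ are to be treated as the shorthand defined in (\ref{eq:rkstagesx}). I would then set to zero the gradient of $\mathcal{L}$ with respect to each primal variable and read off the various blocks of (\ref{eq:aug1})--(\ref{eq:aug5}) together with the boundary conditions.

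Variations with respect to the multipliers $\lambda_0$, $\lambda_{n+1}$, $\Lambda_{n,i}$ reproduce the state constraints $x_0=\alpha$, (\ref{eq:aug1}), and the slope definitions (\ref{eq:n10})--(\ref{eq:aug2}). The variation with respect to $U_{n,i}$ gives $b_i\,\partial_u f(X_{n,i},U_{n,i},t_n+c_ih_n)\T\Lambda_{n,i}=0$, which under the hypothesis $b_i\ne 0$ is exactly (\ref{eq:aug5}). The variation with respect to $x_N$ yields $\nabla\mathcal{C}(x_N)=\lambda_N$, while, for $0\le m\le N-1$, the variation with respect to $x_m$ gives, after using the definition (\ref{eq:n11}) of $\ell_{m,i}$,
$$
\lambda_{m+1}=\lambda_m+h_m\sum_{i}b_i\,\ell_{m,i},
$$
which matches (\ref{eq:aug3}) with $B_i=b_i$ as in (\ref{eq:trick}).

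The main obstacle, and the step where the recipe (\ref{eq:trick}) actually materialises, is the variation with respect to the slope $k_{m,j}$. This variable enters the Lagrangian both directly (through the terms $h_n b_i\lambda_{n+1}\T k_{n,i}$ and $-h_n b_i\Lambda_{n,i}\T k_{n,i}$) and implicitly through the stages $X_{m,i}$, since $\partial X_{m,i}/\partial k_{m,j}=h_m a_{ij}I$. Setting the resulting gradient to zero, dividing by $h_m b_j$ (where $b_j\ne 0$ is now essential), and using the recursion for $\lambda$ already obtained above to eliminate $\lambda_{m+1}$ in favour of $\lambda_m$, I would arrive at
$$
\Lambda_{m,j}=\lambda_m+h_m\sum_{i}\Bigl(b_i-\frac{b_i a_{ij}}{b_j}\Bigr)\ell_{m,i}.
$$
Comparison with (\ref{eq:aug4}) identifies $A_{ji}=b_i-b_i a_{ij}/b_j$, which is exactly (\ref{eq:trick}); the relation $C_i=c_i$ is simply the time at which $f$ is evaluated in the Lagrangian, and $B_i=b_i$ was already obtained from the $x_m$ variation. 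The non-vanishing of $b_j$ is not a mere technicality: if some $b_j$ were zero, the constraint $k_{n,j}=f(X_{n,j},U_{n,j},\ldots)$ would drop out of (\ref{eq:hat}), the multiplier $\Lambda_{n,j}$ would not even be introduced, and the $\Lambda$-recursion above could not be formed, which is why that case has been deferred to the appendix.
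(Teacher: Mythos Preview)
Your proposal is correct and follows exactly the approach sketched in the paper: adapt the discrete Lagrangian (\ref{eq:hat}) by letting $f$ depend on the controls and take gradients with respect to all primal variables and multipliers. The paper itself gives no details beyond calling this a ``straightforward application'' of the Lagrange-multiplier machinery (Lemma~\ref{l:automatic}); your write-up simply carries out those computations explicitly, including the key step where the $k_{m,j}$-variation, after division by $b_j$ and substitution of the $\lambda$-recursion, produces the formula $A_{ji}=b_i-b_ia_{ij}/b_j$.
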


In other words, when the direct approach is used, we arrive at {\em exactly the same set of equations} for $x_n$, $\lambda_n$, $X_{n,i}$, $\Lambda_{n,i}$, $U_{n,i}$ we obtained, with the help of RK technology, via the indirect approach in Theorem \ref{th:main2}. Let us observe that the direct approach does not provide \lq natural\rq\ approximations $u_n$ to $u(t_n)$. Hager \cite{hager} suggests to define $u_n$ by locally minimising $H(x_n,\lambda_n, u,t_n)$ which leads to  (\ref{eq:aug6}). He also notes (\cite{hager}, Table 4) that the order of convergence of the control stages $U_{n,i}$ may be lower than that in $u_n$, something that it is not surprising at all: typically, internal stages are less accurate than end-of-step approximations. We remark that, in the direct approach and once the RK method for $x$ has been chosen, the minimisation of $\cal C$  implicitly provides the \lq right\rq\  coefficients $A_{ij}$, $B_i$, $C_i$ to be used in the integration of the costates in order to ensure symplecticness of the overall PRK integrator. In the indirect approach those coefficients have to be determined by using the relations (\ref{eq:sympcond2})--(\ref{eq:sympcond2c}) and Theorem \ref{th:cooper2}.

While the direct and indirect approaches may be seen as mathematically equivalent here, both have their own interest. The direct approach  suggests to solve the discrete PRK equations with the help of optimisation techniques and these may be an efficient choice in practice. On the other hand, the direct approach \lq hides\rq\ the PRK integration of the costates, a fact that may lead to the false impression that the order of accuracy of the overall procedure coincides with the order of the RK scheme used to discretise the differential constraint (\ref{eq:odecontrol}). This was emphasised in \cite{hager}, where the order of the PRK method (\ref{eq:rkabc}), (\ref{eq:rkABC}), (\ref{eq:trick}) is called the order of the RK method (\ref{eq:rkabc}) {\em for optimal control problems}. A  discussion of the advantages of the direct and indirect approaches is not within our scope here, see e.g. \cite[Chapter 9]{trelat}, \cite{zuazua}.

\section{Some extensions}
\label{sec:exten}

We now consider more general optimal control problems. We shall need to generalize
Theorems \ref{th:cooper} and \ref{th:cooper2} to the situation where the  quantities $I$ or $S$ are not constant along trajectories of the system but vary in a known manner.

\subsection{Generalised conservation}
Here are simple generalisations of Theorems \ref{th:cooper} and \ref{th:cooper2}. Only Theorem \ref{th:cooper4} will be proved; the other proof is  very similar.

 In order to better understand Theorem \ref{th:cooper3}, we may look at the case where $y$ comprises positions and velocities of a mechanical system and $I$ is the kinetic energy. Conservation of energy demands that the rate of change of $I$ coincides with the rate of change (power)  $\varphi$ of the work of the forces. Along each trajectory, the gain in kinetic energy exactly matches the total work exerted by the forces.
\begin{theorem}\label{th:cooper3}Assume that, for the differential system (\ref{eq:ode}), there exist a real-valued bilinear mapping $I$  in $\mathbb{R}^D\times\mathbb{R}^D$ and a real-valued function $\varphi$ in $\mathbb{R}^D$ such that, for each solution $y(t)$
$$
\frac{d}{dt} I(y(t),y(t)) = \varphi(y(t))
$$
and, therefore,
$$
I(y(t_0+T),y(t_0+T))-I(y(t_0),y(t_0)) = \int_{t_0}^{t_0+T} \varphi(y(t))\, dt.
$$
If the system is integrated by means of a symplectic RK scheme as in (\ref{eq:rkstep})--(\ref{eq:rkstages}), then
$$
I(y_N,y_N) -I(y_0,y_0) = \sum_{n=0}^{N-1} h_n \sum_{i=1}^s b_i\, \varphi(Y_{n,i}).
$$
\end{theorem}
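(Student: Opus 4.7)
The plan is to mimic the proofs of Theorem \ref{th:cooper} and Lemma \ref{lemma}, but keeping track of the nonzero right-hand side $\varphi$. The only new ingredient compared with the strict conservation case is that, after all cancellations provided by the symplecticness condition (\ref{eq:sympcond}), one is left with a stage-wise sum that, by the hypothesis, equals the quadrature approximation to $\int \varphi(y(t))\,dt$.

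First I would convert the hypothesis into a pointwise identity. Since $(d/dt)I(y(t),y(t)) = \varphi(y(t))$ must hold for every solution and hence for every initial condition $y(t_0)=y$, the bilinearity of $I$ together with (\ref{eq:ode}) yields
\begin{equation*}
I(F(y,t),y)+I(y,F(y,t)) = \varphi(y)
\end{equation*}
for every $y\in\mathbb{R}^D$ and every real $t$. This is the analogue of the identity used in the proof of Theorem \ref{th:cooper2}.

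Next I would focus on a single step and compute $I(y_{n+1},y_{n+1})-I(y_n,y_n)$ by expanding $y_{n+1}=y_n+h_n\sum_i b_i K_{n,i}$ via bilinearity; this produces a term linear in $h_n$ involving $I(K_{n,i},y_n)+I(y_n,K_{n,i})$ and a quadratic term $h_n^2\sum_{ij}b_ib_j I(K_{n,i},K_{n,j})$. I would then eliminate $y_n$ using the stage relation $y_n = Y_{n,i}-h_n\sum_j a_{ij}K_{n,j}$ in the first and $y_n = Y_{n,j}-h_n\sum_i a_{ji}K_{n,i}$ in the second, exactly as in the proof of Lemma \ref{lemma}. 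Grouping terms, the $h_n^2$ contribution becomes
\begin{equation*}
h_n^2 \sum_{i,j}\big(b_ib_j-b_ia_{ij}-b_ja_{ji}\big)\,I(K_{n,i},K_{n,j}),
\end{equation*}
which vanishes by (\ref{eq:sympcond}).

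What remains is $h_n\sum_i b_i\bigl[I(K_{n,i},Y_{n,i})+I(Y_{n,i},K_{n,i})\bigr]$. Using $K_{n,i}=F(Y_{n,i},t_n+c_ih_n)$ together with the pointwise identity from the first step, each bracket equals $\varphi(Y_{n,i})$, so
\begin{equation*}
I(y_{n+1},y_{n+1})-I(y_n,y_n) = h_n\sum_{i=1}^s b_i\,\varphi(Y_{n,i}).
\end{equation*}
Telescoping over $n=0,\dots,N-1$ yields the stated formula.

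The main obstacle is purely bookkeeping: one must perform the two substitutions for $y_n$ with the correct stage index in each factor of the bilinear form so that the $h_n^2$ terms assemble into the symplecticness combination $b_ib_j-b_ia_{ij}-b_ja_{ji}$. Everything else is an exact repetition of the standard argument, with the trivial modification that the right-hand side now carries the quadrature sum of $\varphi$ rather than zero.
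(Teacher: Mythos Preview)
Your proof is correct and follows essentially the same route the paper indicates: the paper does not prove Theorem~\ref{th:cooper3} explicitly but says the argument is the same as for Theorem~\ref{th:cooper4}, which invokes Lemma~\ref{lemma} and then uses the pointwise identity $S(f,p)+S(q,g)\equiv\varphi$ at the internal stages. You reproduce the RK version of that lemma inline (expand by bilinearity, eliminate $y_n$ via the stage relations, cancel the $h_n^2$ terms with (\ref{eq:sympcond})) and then apply the identity $I(F(y,t),y)+I(y,F(y,t))=\varphi(y)$ at $(Y_{n,i},t_n+c_ih_n)$, which is exactly the intended argument.
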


Note that the last sum, based on the RK quadrature weights $b_i$ and in the approximation $y(t_n+c_ih_n) \approx Y_{n,i}$, is the \lq natural\rq\ RK discretisation of the corresponding integral.
\begin{theorem}\label{th:cooper4}Assume that, for the partitioned system (\ref{eq:pode}), there exist a real-valued bilinear map $S$ in $\mathbb{R}^{D-d}\times\mathbb{R}^d$
and a real-valued function $\varphi$ in $\mathbb{R}^{D-d}\times\mathbb{R}^d$, such that for each solution
$$
\frac{d}{dt} S(q(t),p(t)) = \varphi(q(t),p(t))
$$
and, therefore,
$$
S(q(t_0+T),p(t_0+T)) - S(q(t_0),p(t_0)) = \int_{t_0}^{t_0+T} \varphi(q(t),p(t))\, dt.
$$
If the system is integrated by means of a  symplectic PRK scheme as in
(\ref{eq:prkstep1})--(\ref{eq:prkstages1}), then
$$
S(q_N,p_N) -S(q_0,p_0) = \sum_{n=0}^{N-1} h_n \sum_{i=1}^s b_i \,\varphi(Q_{n,i},P_{n,i}).
$$
\end{theorem}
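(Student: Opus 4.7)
The plan is to mirror the proof of Theorem \ref{th:cooper2}: apply Lemma \ref{lemma} to obtain a one-step identity, identify the right-hand side with $h_n \sum_i b_i \varphi(Q_{n,i},P_{n,i})$, and then telescope over $n = 0,\dots,N-1$. The only difference with the conservative case is that the bracketed expression in Lemma \ref{lemma} will no longer vanish; instead it will reproduce the integrand $\varphi$ at the stages.

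First I would turn the hypothesis on $\varphi$, stated along solutions, into a pointwise identity on phase space. Since through any point $(q,p) \in \mathbb{R}^{D-d} \times \mathbb{R}^d$ and any time $t$ there passes a solution of (\ref{eq:pode}), the chain rule and the bilinearity of $S$ give
$$
S(f(q,p,t),p) + S(q,g(q,p,t)) = \varphi(q,p)
$$
for all admissible $(q,p,t)$.

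Next, since the PRK scheme is symplectic, condition (\ref{eq:sympcond2}) holds and Lemma \ref{lemma} applies, yielding
$$
S(q_{n+1},p_{n+1}) - S(q_n,p_n) = h_n \sum_{i=1}^s b_i \bigl( S(k_{n,i},P_{n,i}) + S(Q_{n,i},\ell_{n,i}) \bigr).
$$
The second part of symplecticness, (\ref{eq:sympcond2c}), enforces $c_i = C_i$, so by (\ref{eq:new2}) the slopes $k_{n,i} = f(Q_{n,i},P_{n,i},t_n+c_ih_n)$ and $\ell_{n,i} = g(Q_{n,i},P_{n,i},t_n+c_ih_n)$ are evaluated at the \emph{same} stage time. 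Plugging these into the pointwise identity gives $S(k_{n,i},P_{n,i}) + S(Q_{n,i},\ell_{n,i}) = \varphi(Q_{n,i},P_{n,i})$, and hence
$$
S(q_{n+1},p_{n+1}) - S(q_n,p_n) = h_n \sum_{i=1}^s b_i \varphi(Q_{n,i},P_{n,i}).
$$
Summing from $n=0$ to $n=N-1$ telescopes to the claimed identity.

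There is no real obstacle here beyond bookkeeping. The one point requiring care is the use of (\ref{eq:sympcond2c}): without $c_i = C_i$ the two slopes $k_{n,i}$ and $\ell_{n,i}$ would live at different time-arguments and the pointwise relation for $\varphi$ could not be invoked at a single point $(Q_{n,i},P_{n,i},t_n+c_ih_n)$. This is precisely why the abscissa condition is part of the definition of a symplectic PRK scheme and why the result is stated under the full symplecticness assumption.
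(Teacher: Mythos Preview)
Your proof is correct and follows exactly the paper's own argument: apply Lemma~\ref{lemma}, convert the hypothesis into the pointwise identity $S(f(q,p,t),p)+S(q,g(q,p,t))\equiv\varphi(q,p)$, and telescope. Your additional remark on why (\ref{eq:sympcond2c}) is needed is a helpful elaboration of what the paper leaves implicit in its ``cf.\ the proof of Theorem~\ref{th:cooper2}.''
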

\begin{proof} Use Lemma \ref{lemma} and note that, under the present hypotheses,
$$
S(k_{n,i},P_{n,i})+S(Q_{n,i},\ell_{n,i})=\varphi(Q_{n,i},P_{n,i}),
$$
because $S(f(q,p,t),p)+S(q,g(q,p,t))\equiv \varphi(q,p)$ (cf. the proof of Theorem \ref{th:cooper2}).
\end{proof}

\subsection{Other optimal control problems}
Consider first the situation in Section \ref{sec:control}, but assume that the value $x(t_0)$ is not prescribed. Then $\delta(t_0)$ is free and for (\ref{eq:ander}) to hold it is necessary to impose the condition $\lambda(t_0)=0$. This replaces in (\ref{eq:conditions}) the initial condition $x(t_0)=\alpha$. The results in Section \ref{sec:control} are valid in this setting after the obvious modifications.

We next look at the case where (\ref{eq:odecontrol}) and $x(0) = \alpha$ are imposed, but the cost function is given by
\begin{equation}\label{eq:newcost}
{\cal C}(x(t_0+T))+\int_{t_0}^{t_0+T} {\cal D}(x(t),u(t),t)\, dt
\end{equation}
(this is often called a Mayer-Lagrange cost \cite{trelat}, as distinct from the Mayer cost ${\cal C}(x(t_0+T))$ envisaged before).  The adjoint system and constraints are, respectively,
\begin{eqnarray*}
&&\frac{d}{dt} \lambda = - \partial_x f(x,u,t)\T\, \lambda-\nabla_x {\cal D}(x,u,t),\\
&&\partial_u f(x,u,t)\T \lambda + \nabla_u {\cal D}(x,u,t)= 0.
\end{eqnarray*}
These are of the form in (\ref{eq:pseudo}) for the pseudo-Hamiltonian $H = \lambda^T f + {\cal D}$.

The conservation property (\ref{eq:prop}) is replaced by the generalised conservation
formula
\begin{eqnarray*}
&&\lambda(t_0+T)\T \delta(t_0+T) -\lambda(t_0)\T \delta(t_0) \\ &&\qquad{}+\int_{t_0}^{t_0+T}  \Big(\nabla_x{\cal D}(x(t),u(t),t)\T \delta(t)+\nabla_u
{\cal D}(x(t),u(t),t)\T \zeta(t)\Big)\, dt=0,
\end{eqnarray*}
which holds for arbitrary  $\delta(t)$, $\lambda(t)$ satisfying  the variational equations (\ref{eq:varcontrol}), the adjoint system and the constraints. After setting $\delta(t_0) =0$ and $\lambda(t_0+T) =\nabla {\cal C}(x(t_0+T))$, the generalised conservation formula expresses that the the variation of the cost vanishes, i.e. that the first-order necessary conditions for the minimisation hold.

For a symplectic PRK discretisation of the algebraic-differential system,
Lemma \ref{lemma} may be used, just as in the proof of Theorem \ref{th:cooper4}, to show
(the notation should be clear by now):
\begin{eqnarray*}
&&\lambda_N\T\delta_N -\lambda_0\T\delta_0 + \sum_{n=0}^{N-1}h_n \sum_{i=1}^s b_i \Big(
\nabla_x {\cal D}(X_{n,i},U_{n,i},t_n+c_ih_n)\T \Delta_{n,i}\\&&\qquad\qquad\qquad\qquad\qquad\qquad\qquad{}+
\nabla_u{\cal  D}(X_{n,i},U_{n,i},t_n+c_ih_n)\T Z_{n,i}
 \Big)=0.
\end{eqnarray*}
By setting $\lambda_N= \nabla {\cal C}(x_N)$ and $\delta_0=0$, this formula expresses the necessary condition (orthogonality between gradient and variantion) for the discrete solution to minimise the discretised cost
$$
{\cal C}(x_N) +  \sum_{n=0}^{N-1}h_n \sum_{i=1}^s b_i {\cal D}(X_{n,i},U_{n,i}).
$$
Therefore also in this case, results corresponding to Theorems \ref{th:main2} and \ref{th:direct} hold  for a symplectic PRK discretisation.

It is of course possible to combine the cost (\ref{eq:newcost}) with alternative boundary specifications. If $x(t_0)$ is not prescribed, then we have to impose $\lambda(t_0) = 0$, as pointed out above. If both $x(t_0)=\alpha$ and $x(t_0+T) = \beta$ are imposed (in which case the term ${\cal C}(x(t_0+T))$ may be dropped from the cost), then $\lambda(t_0)$ and $\lambda(t_0+T)$ are both free.

\subsection{Constrained controls}

Let us go back once more to the problem in Section \ref{sec:control} and suppose  that the controls $u$ are constrained so that, for each $t$, it is demanded that $u(t)\in U$, where $U$ is a given closed, convex subset of $\mathbb{R}^\nu$. Then (see  e.g. \cite{hager}), the constraint (\ref{eq:constraints}) on $\lambda$ has to be replaced by
$$
u(t)\in U,\qquad  - \partial_u f(x(t),u(t),t)\T \lambda(t)\in N_U(u(t)),
$$
where $N_U(u)$ is the cone of all vectors $w\in\mathbb{R}^\nu$ such that, for each $v\in U$, $w\T(v-u)\leq 0$. Proceeding as in  Proposition \ref{propcontrol}, we see that now
$(d/dt) \lambda(t)\T\delta(t)\geq 0$ and therefore
$$
\nabla {\cal C}(x(t_0+T))\T \delta(t_0+T) \geq 0,
$$
which is the necessary condition for a minimum in the continuous problem. For a PRK discretisation of the boundary value for the states+costates system, the relation $$(d/dt) \lambda(t)\T\delta(t)\geq 0$$ implies
$$
k_{n,i}\T\Lambda_{n,i}+\Delta_{n,i}\T\ell_{n,i}\geq 0
$$
and therefore we may use  Lemma \ref{lemma} yet again to conclude that for symplectic PRK methods and if {\em the weights $b_i$ are positive},
$$
\nabla {\cal C}(x_N)\T \delta_N \geq 0.
$$
Once more, results similar to Theorems \ref{th:main2} and \ref{th:direct} hold. See \cite{hager2} for   order reduction results.

\section{Lagrangian mechanics}
\label{s:mech}
Let us now consider Lagrangian mechanical systems \cite{arnold}. Denote by ${\cal L}(x,u,t)$ the Lagrangian function, where $x\in\mathbb{R}^d$ are the Lagrangian co-ordinates and $u=(d/dt) x$ the corresponding  velocities. According to Hamilton's principle, the trajectories
$t\mapsto x(t)$ of the system are characterised by the fact that they render stationary (often minimum) the action integral
$$
\int_{t_0}^{t_0+T} \!\!{\cal L}(x(t),u(t),t)\, dt,
$$
among all curves $t\mapsto \bar x(t)$ with $\bar x(t_0) = x(t_0)$ and $\bar x(t_0+T) = x(t_0+T)$. This may of course be viewed as a control problem to make stationary (or even maximum) the cost (\ref{eq:newcost}) with ${\cal C} \equiv 0$ and ${\cal D} = -{\cal L}$, subject to the constraint $\dot x = u$
with fixed end-values $x(t_0)$ and $x(t_0+T)$. The theory in Section \ref{sec:exten} applies.
The pseudo-Hamiltonian is $H(x,\lambda,u,t)= \lambda\T u-{\cal L}(x,u,t)$. The constraint $\nabla_u H = 0$ reads $\lambda = \nabla_u {\cal L}(x,u,t)$; thus the control costates coincide with the mechanical momenta. The elimination of the controls with the help of Pontryagin's principle would determine $u$ as a function $\Phi(x,\lambda,t)$ by maximising (recall that we are here trying to maximise the cost!) the function
$u\mapsto H(x,\lambda,u,t)$. In mechanics, this  exactly corresponds with the
theory of the Legendre transformation as presented in \cite[Section 14]{arnold}: that theory shows that, if ${\cal L}$ is a strictly convex function of $u$, then, at given $x$ and $t$, the velocity vector $u$ that corresponds to a given value of the momentum $\lambda$ is globally uniquely defined and maximises $\lambda\T u-{\cal L}(x,u,t)$. In most mechanical problems ${\cal L} = {\cal T}(x,u,t)-{\cal V}(x,t)$, with $\cal T$ and $\cal V$ the kinetic and potential energy respectively, and  $\cal T$ is quadratic, positive-definite as a function of $u$, thus ensuring the required convexity. In control theory the elimination of the controls $u$ in the pseudo-Hamiltonian $H$ gives rise to the \lq control\rq\ Hamiltonian $\cal H$; correspondingly, in mechanics
the Hamiltonian is defined as the result of expressing in $\lambda\T u-{\cal L}(x,u,t)$ the velocities as functions of the momenta (and $x$ and $t$).
Finally the evolution of the states and costates (mechanical co-ordinates and momenta) obeys Hamilton's canonical equations. Hamiltonian solution flows are symplectic and, in this way, we have travelled all the way from action minimisation to symplecticness.

 A similar journey may take place in the discrete realm. Choose any RK scheme (\ref{eq:rkabc}) with nonzero weights to discretise the differential constraint $(d/dt) x = u$ and minimise  the associated discrete action
$$
\sum_{n=0}^{N-1} h_n \sum_{i=1}^s b_i\, {\cal L}(X_{n,i},U_{n,i}, t_n+c_ih_n).
$$
As we know from  Theorem \ref{th:main2}, this direct approach implies a symplectic PRK integration of the Hamiltonian system for $x$ and $\lambda$, where the $\lambda$ equations are integrated with the coefficients (\ref{eq:rkABC}). This is nothing more than the variational construction of PRK symplectic integrators, already presented in the early paper \cite{suris2} by Suris (see \cite{marsden} for more information on integrators based on the principle of least action, cf. \cite{lall}). In this way, Hager's result \cite{hager} may be viewed as an extension of Suris's work to general control problems.

\section{What is the adjoint of a Runge-Kutta method? Reflecting and transposing  coefficients}
\label{s:scherer}

In this section we examine the relations between the preceding material and the notion of the adjoint of an RK method.

Scherer and T\"{u}rke \cite{scherer} associated with the set of RK coefficients (\ref{eq:rkabc}) two new sets called the reflection and the transposition of the original. The reflected coefficients are given by ($i,j= 1,\dots,s$)
 $$
 a_{ij}^r = b_j-a_{ij},\quad b_i^r = b_i, \quad c_i^r = 1-c_i
 $$
 and the transposed coefficients are defined, only for methods with nonzero weights $b_i$, by
 $$
 a_{ij}^t = b_ja_{ji}/b_i,\quad b_i^t = b_i, \quad c_i^t = 1-c_i.
 $$
 The operations of reflection and transposition commute: the transposition of the reflection coincides with the reflection of the transposition as both lead to
 $$
 a_{ij}^{rt} = b_j-b_ja_{ji}/b_i, \quad b_i^{rt}=b_i,\quad  c_i^{rt} = c_i.
 $$
 Furthermore both operations are involutions: each is its own inverse.

The paper \cite{scherer} introduces the operations of reflection and transposition as algebraic manipulations that make it possible to interrelate important families of RK methods; no attempt is made there to interpret computationally the meaning of integrating with the reflected or transposed coefficients. What do reflection and transposition mean?
The interpretation of reflection is well known \cite[Section 3.6]{ssc},  \cite[Chapter II, Theorem 8.3]{hnw}: a step of length $-h_n$ with the reflected RK method inverts the transformation $y_n\mapsto y_{n+1}$ induced by a step of length $h_n$ with the original method. In this paper we have seen this idea at work when moving from (\ref{eq:rksteplambda})--(\ref{eq:rkstageslambda}) to (\ref{eq:rksteplambdabis})--(\ref{eq:rkstageslambdabis}). The formulas (\ref{eq:trick}) provide meaning to the idea of transposition: to construct a symplectic PRK out of a given RK method with nonvanishing weights the $p$ coefficients  are determined by reflecting and transposing the given $q$ coefficients. The transposed of the $q$ coefficients are then those required to integrate backwards the $p$ equations in, say, sensitivity analyses.

As a further illustration of these ideas, consider the linear non-autonomous system
$$
\frac{d}{dt} q = M(t) q,\quad \frac{d}{dt} p = - M(t)\T p,
$$
integrated with the PRK method (\ref{eq:rkabc}), (\ref{eq:rkABC}) (this is a Hamiltonian system). Since $p$ and $q$ are uncoupled, this amounts to an RK integration of the $q$ equations with the coefficients (\ref{eq:rkabc}) together with an RK integration of the $p$ equations with the coefficients (\ref{eq:rkABC}). The system has the invariant
$q\T p$; Theorem \ref{th:cooper2} ensures that it will be preserved if the $p$ coefficients are the transposition of the reflection of the $q$ coefficients. Both sets of coefficients only coincide if $q$ itself is integrated symplectically. If we wish to preserve the invariant, a nonsymplectic integration of $q$ is possible, but then one has to compensate by integrating the $p$ equations in an appropriate way and the order and stability of the $p$ integration have to be investigated separately. Again,  if the $p$ equations are integrated backward in time, then, preservation of $q\T p$ requires that such backward integration be performed with the transposition of the coefficients used to propagate $q$ forward.

We conclude this section  with a remark on terminology. Monographs such as \cite{hlw} and \cite{ssc} use the word {\em adjoint} to refer to the method with reflected coefficients. Section \ref{s:adjoint} and our last comments suggest that, in order to proceed as in the differential equation case, it would have been better to keep the word {\em adjoint} for the reflected and transposed method. And call {\em reflected} to what in \cite{hlw} or \cite{ssc} is called {\em adjoint}. With that alternative terminology, for RK schemes, symplecticness would simply be {\em self-adjointness}.

\section{Conclusion}
\label{s:conc}
Symplectic RK and PRK schemes preserve, by definition, the symplectic form in phase space; in addition, they may be characterized as those RK or PRK integrators that exactly preserve each quadratic invariant of the system being integrated. In  sensitivity analysis, optimal control and other areas, adjoint systems  are introduced and possess paramount importance; these adjoints are defined so as to preserve the key quadratic invariant (\ref{eq:prop}). Therefore, there are  tight connections between those areas and the theory of symplectic integration; we hope the present paper has helped to understand those connections.

\bigskip

{\bf Acknowledgments.} I am extremely indebted to E. Zuazua for providing me with the initial motivation for this research. He, J. Frank, A. Murua, S. Ober-Bl\"{o}baum and E. Tr\'{e}lat kindly provided  useful references. Additionally, A. Murua has to be thanked for sharing  some of his many insights; in particular in connection with automatic differentiation. The extremely careful reading of the manuscript made by one of the referees is also acknowledged with gratitude. 

\section*{Appendix: Schemes with some vanishing weights}
\label{s:vanishing}

If one or more weights $b_i$ in (\ref{eq:rkabc}) vanish, then it is not possible to use  the recipe (\ref{eq:trick}) to define the coefficients required to create a combined symplectic PRK method (\ref{eq:rkabc}), (\ref{eq:rkABC}). Given the partitioned system (\ref{eq:pode}) and the $q$ coefficients (\ref{eq:rkabc}), how to integrate the $p$ equations so as to have a symplectic scheme? The solution to this problem is rather weird and it is best to
  begin with the simplest example.

 Let us study the second-order scheme (due to Runge in his 1895 original paper \cite[ Section II.1]{hnw}), $s=2$,
 \begin{equation}\label{eq:runge}
 a_{11} = a_{21} = a_{22}=0,\: a_{12} = 1/2,\: b_1= 1, \: b_2 = 0,\: c_1 = 1/2,\: c_2=0.
 \end{equation}
 While it is customary to label the stages so that the abscissas $c_i$ increase with $i$,  we have departed from this practice; if we adopted it,  formula (\ref{eq:fancyp}) below would get a rather disordered appearance.

 We regularise the zero weight and consider the one-parameter family, $\epsilon\neq 0$:
 \begin{equation} \label{eq:01}
 a_{11} = a_{21} = a_{22}=0,\: a_{12} = 1/2,\: b_1= 1, \: b_2 = \epsilon,\: c_1 = 1/2,\: c_2=0.
 \end{equation}
 (The regularised scheme is not even consistent, but this does not hinder the argument.)
 From (\ref{eq:trick}), we set
  \begin{equation} \label{eq:02}
 A_{11} = 1,\:A_{12} = A_{22}=\epsilon,\:A_{21} =1- 1/(2\epsilon),\:  B_1= 1,\: B_2=\epsilon, \: C_1 = 1/2,\: C_2 = 0.
 \end{equation}
 Thus, the PRK specified by (\ref{eq:01})--(\ref{eq:02}) is symplectic for each $\epsilon$. The idea now is to take limits as $\epsilon\rightarrow 0$; the limit integrator, {\em if it exists}, will preserve quadratic invariants and, when applied to Hamiltonian problems, the symplectic structure. The difficulty is that from the equation that defines $P_{n,2}$
$$
P_{n,2} = p_n +h_n\left(1-\frac{1}{2\epsilon}\right)  g(Q_{n,1},P_{n,1}, t_n+h_n/2) + h_n\epsilon\, g(Q_{n,2},P_{n,2},t_n)
$$
we may expect that, for fixed $q_n$, $p_n$, the stage vector $P_{n,2}$ grows unboundedly as $\epsilon \rightarrow 0$ and that, therefore, a limit integrator cannot be defined. However, the stage $P_{n,2}$ only affects  $P_{n,1}$ and $p_{n+1}$ through the {\em small} coefficients $A_{1,2} = B_2 = \epsilon$, and this makes it possible to prove that the limit scheme exists for some particular differential equations. Specifically, we assume in the remainder of this section that in the partitioned differential system (\ref{eq:pode}) being integrated, $f$ and $g$ have the special form
\begin{equation}\label{eq:especial}
f = f(q,t)\qquad g= L(q,t)+M(q,t)p
\end{equation}
(with $q=x$, $p=\lambda$, this format includes the system (\ref{eq:odex}), (\ref{eq:adj}) in Section \ref{s:adjoint}). When (\ref{eq:especial}) holds, the $q$ integration  with coefficients (\ref{eq:01}) converges, as $\epsilon\rightarrow 0$, to the integration with the originally given coefficients
 (\ref{eq:runge}). The system for the $p$ stages $P_{1}$, $P_{2}$ (the index $n$ is sometimes dropped to shorten the formulas) may be written as
 \begin{eqnarray*}
 P_1&=& p_n+h_n (L_1+M_1P_1)+ h_n  (\epsilon L_2 + h_n M_2 m_2),\\
m_2 &=& \frac{\epsilon}{h_n} p_n + \big( \epsilon -\frac{1}{2}\big) (L_1+ M_1 P_{1})+\epsilon(\epsilon L_2+h_nM_2m_2),
 \end{eqnarray*}
 where we have scaled $m_2 = (\epsilon/h_n) P_2$ to avoid blow-up and used the abbreviations
 \begin{eqnarray*}
 &&L_1 = L(Q_{1},t_n+h_n/2),\qquad M_1 = M(Q_1,t_n+h_n/2),\\ &&L_2 = L(Q_{2},t_n),\qquad\quad\qquad\:\,M_2 = M(Q_2,t_n).
 \end{eqnarray*}
 Now take limits as $\epsilon\rightarrow 0$, to get
\begin{eqnarray*}
P_1&=& p_n+h_n (L_1+M_1P_1)+ h_n^2  M_2 m_2,\\
m_2 &=&   -\frac{1}{2} (L_1+M_1P_1).
\end{eqnarray*}
Since $B_1 = A_{11}$ and $B_2= A_{12}$, the end-of-step approximations is given by $p_{n+1} = P_1$.

We write these equations in a way similar to (\ref{eq:prkstep1})--(\ref{eq:prkstages1}):
\begin{eqnarray}\label{eq:rungep}
p_{n+1}&=& p_n+h_n \ell_1 + h_n^2  M_2 m_2,\\
\nonumber \ell_1 &=& g(Q_1, P_1, t_n+h_n/2),\\
\nonumber
M_2 & =& M(Q_2,t_n),\\
\nonumber
P_1&=& p_n+h_n \ell_1 + h_n^2  M_2 m_2,\\
m_2 &=&   -\frac{1}{2} \ell_1.\nonumber
 \end{eqnarray}

The combination of these formulas for $p$ with the scheme (\ref{eq:runge}) for $q$
is a {\em first-order} integrator that conserves quadratic invariants as in Theorem \ref{th:cooper2} and, for Hamiltonian problems, preserves the symplectic structure. Of course the  integrator is {\em not} a PRK method; since $M =\partial_p g$, the formula (\ref{eq:rungep}) is reminiscent of Runge-Kutta methods that use higher derivatives of the solution \cite[Section II.13]{hnw}. (Such high-order derivative methods cannot be symplectic for general problems \cite{non}.) Note
that, while $\ell_1$ is an approximation to the first derivative $(d/dt)p$, the vector $M_2m_2$ has the dimensions of the second derivative $(d^2/dt^2)p$.

Let us now turn to the general case. Assume that in (\ref{eq:rkabc}) the first $r$ weights $b_1$, \dots, $b_r$ do not vanish, while $b_{r+1}= \dots = b_s=0$. The regularisation procedure used for Runge's method leads to the fancy integrator:
\begin{eqnarray}
p_{n+1} &=& p_n+ h_n \sum_{i=1}^r b_i \ell_i +h_n^2 \sum_{\alpha=r+1}^s M_\alpha m_\alpha.\label{eq:fancyp}\\
P_i &=& p_n +h_n \sum_{j=1}^r \big(b_j -\frac{b_ja_{ji}}{b_i}\big)\ell_j\\&&\qquad\qquad+h_n^2 \!\sum_{\beta = r+1}^s \!\!\big(1-\frac{b_ja_{\beta i}}{b_i}\big)M_\beta m_\beta,\qquad i = 1,\dots,r,\nonumber\\ \label{eq:sab}
m_\alpha & = & -\sum_{j=1}^r b_j a_{j\alpha}\ell_j -h_n \sum_{\beta=r+1}^s a_{\beta\alpha} M_\beta m_\beta,\qquad \alpha = r+1,\dots, s.
\end{eqnarray}
Here the $r$ vectors $\ell_i$ are as  in (\ref{eq:new2}), so that the method uses $r$ slopes and  additionally $s-r$ matrices $M_\alpha = M(Q_\alpha,t_n+  c_\alpha h_n)$. From the relations (\ref{eq:sab}) the $m_\alpha$ may be viewed as  functions of the $\ell_i$.

The following result is a consequence of the construction via regularisation:

\begin{theorem}\label{th:appA} Consider partitioned systems of the special format (\ref{eq:especial}), where the $q$ equations are integrated with the RK scheme (\ref{eq:rkabc}), $b_1\neq 0$,\dots, $b_r\neq 0$, $b_{r+1}= \dots = b_s=0$, and the $p$ equations with the formulas in (\ref{eq:fancyp})--(\ref{eq:sab}). If $S(q(t),p(t))$ is a conserved quantity as in Theorem \ref{th:cooper2}, then
$S(q_n,p_n)$ is independent of $n$. If the system is Hamiltonian, then the map $(q_n,p_n)\mapsto (q_{n+1},p_{n+1})$ is symplectic.
\end{theorem}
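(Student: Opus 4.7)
The plan is to prove Theorem \ref{th:appA} via the same \emph{regularisation} procedure that motivated the scheme (\ref{eq:fancyp})--(\ref{eq:sab}). For each $\epsilon \neq 0$, I would consider the auxiliary RK set obtained from (\ref{eq:rkabc}) by replacing the vanishing weights with $\tilde b_i(\epsilon) = \epsilon$, $i = r+1,\dots,s$; combining this with the coefficients $A_{ij}(\epsilon)$, $B_i(\epsilon) = \tilde b_i(\epsilon)$, $C_i(\epsilon) = c_i$ dictated by (\ref{eq:trick}) yields a bona fide symplectic PRK. By Theorems \ref{th:cooper2} and \ref{th:lsss2}, this auxiliary scheme preserves every bilinear invariant $S$ of the form envisaged and, in the Hamiltonian case, is symplectic.

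Next I would pass to the limit $\epsilon \to 0$. Because $f$ depends only on $q$ and $t$ (cf.\ (\ref{eq:especial})), the $q$-subsystem decouples from $p$; its stages depend smoothly on the weights, so the limit is simply the RK discretisation of the $q$-equation with the \emph{original} coefficients (\ref{eq:rkabc}). The $p$-subsystem is more delicate: the coefficients $A_{\alpha j}(\epsilon)$ for $\alpha > r$ and $j \leq r$ contain a factor $1/\epsilon$ inherited from (\ref{eq:trick}), so the stages $P_{n,\alpha}^\epsilon$ themselves diverge. The saving observation is that wherever these stages feed back into other quantities they are multiplied by coefficients of size $\mathcal{O}(\epsilon)$: in the update of $P_{n,i}^\epsilon$ ($i \leq r$) one has $A_{i\alpha}(\epsilon) = \epsilon - \epsilon a_{\alpha i}/b_i$, and in that of $p_{n+1}^\epsilon$ one has $B_\alpha(\epsilon) = \epsilon$. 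Rescaling $m_{n,\alpha}^\epsilon = (\epsilon/h_n) P_{n,\alpha}^\epsilon$ and exploiting the affine form $\ell_{n,\alpha} = L_\alpha + M_\alpha P_{n,\alpha}^\epsilon$ forced by (\ref{eq:especial}), the implicit system for $\{P_{n,i}^\epsilon\}_{i\leq r}$ and $\{m_{n,\alpha}^\epsilon\}_{\alpha>r}$ admits a regular limit as $\epsilon \to 0$; a direct computation (already sketched for $r=1$, $s=2$ in the regularisation argument preceding the theorem statement) identifies that limit with (\ref{eq:fancyp})--(\ref{eq:sab}). Hence $q_n^\epsilon \to q_n$ and $p_n^\epsilon \to p_n$ whenever $h_n$ is small enough that the limiting implicit system is uniquely solvable.

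The proof concludes by continuity. For every $\epsilon \neq 0$, Theorem \ref{th:cooper2} gives $S(q_{n+1}^\epsilon, p_{n+1}^\epsilon) = S(q_n^\epsilon, p_n^\epsilon)$; sending $\epsilon \to 0$, bilinearity of $S$ and the convergence just established yield $S(q_{n+1}, p_{n+1}) = S(q_n, p_n)$, which iterates to the stated conservation. In the Hamiltonian case, Theorem \ref{th:lsss2} ensures that the Jacobian of the map $(q_n^\epsilon, p_n^\epsilon) \mapsto (q_{n+1}^\epsilon, p_{n+1}^\epsilon)$ satisfies the symplectic identity $J(\epsilon)\T \Omega J(\epsilon) = \Omega$ for each $\epsilon \neq 0$; since the same regularisation analysis applies word-for-word to the linearised stage equations, $J(\epsilon)$ depends continuously on $\epsilon$, and the identity survives in the limit.

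The main obstacle will be making the convergence claim rigorous. Concretely one must rewrite the stage equations in terms of the rescaled unknowns $(P_{n,i}^\epsilon)_{i\leq r}$, $(m_{n,\alpha}^\epsilon)_{\alpha > r}$, verify that the Jacobian of the resulting system at $\epsilon = 0$ is invertible (so that the implicit function theorem yields a smooth family of solutions on a full neighbourhood of $\epsilon = 0$), and check that no further hidden singularity appears. The affine structure of $g$ in $p$ required by (\ref{eq:especial}) is precisely what keeps the blow-up of $P_{n,\alpha}^\epsilon$ \emph{linear} and therefore absorbable by the rescaling; without it the product $\epsilon \cdot g(\cdot, P_{n,\alpha}^\epsilon, \cdot)$ would contain genuinely singular nonlinearities and the candidate limit scheme (\ref{eq:fancyp})--(\ref{eq:sab}) would not even make sense. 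Once the limit is secured, the conservation and symplecticness conclusions follow by the continuity argument above.
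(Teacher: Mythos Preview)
Your proposal is correct and matches the paper's own argument: the paper states that Theorem~\ref{th:appA} ``is a consequence of the construction via regularisation'' and leaves the general-$s$ limit to the reader after working it out for Runge's $s=2$ example; you have simply supplied those details (rescaling $m_{n,\alpha}^\epsilon=(\epsilon/h_n)P_{n,\alpha}^\epsilon$, using the affine structure of $g$, and passing to the limit by continuity). The paper additionally sketches a direct, limit-free alternative: an algebraic identity for $S(q_{n+1},p_{n+1})-S(q_n,p_n)$ analogous to Lemma~\ref{lemma}, whose two sums vanish under the conservation hypothesis; you may wish to mention this as a second route, but your regularisation argument is the one the paper adopts as its primary proof.
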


 With the terminology of Section~\ref{s:scherer}, for systems of the special form (\ref{eq:especial}), the scheme (\ref{eq:fancyp}) may be viewed as the reflected and transposed of (\ref{eq:rkabc}) when this possesses one or more zero weights.

Proofs of  Theorem~\ref{th:appA} that do not rely on taking limits as $\epsilon \rightarrow 0$ are of course possible. For such an alternative proof of the conservation of $S$, we may note that manipulations (not reproduced here) similar to those used to prove Lemma \ref{lemma} show that for the present method, in lieu of (\ref{eq:lemma}), we may write:
\begin{eqnarray*}
S(q_{n+1},p_{n+1}) -S(q_n,p_n) &=& h_n\sum_{i=1}^r b_i \big(S(k_i,P_i)+ S(Q_i,\ell_i)\big)\\&&\quad{} + h_n^2 \sum_{\alpha=r+1}^s   \big(S(k_\alpha,m_\alpha)+S(Q_\alpha,M_\alpha m_\alpha )\big).
\end{eqnarray*}
This is an algebraic identity that does not require  that the system integrated to be conservative. When $S$ is conserved, the first sum vanishes as in the proof of Theorem \ref{th:cooper2}.
For the second sum note that from $S(f(q,t),p)+S(q, L(q,t)+M(q,t)p)\equiv 0$ it follows that $S(f,p)+S(q,Mp)\equiv 0$.

 For the adjoint equations in Section \ref{s:adjoint}, the conclusion of Theorem \ref{th:adjointPRK} holds if the $x$ equations are integrated with a (nonsymplectic) RK method with one or more vanishing weights and the $\lambda$ equations are integrated as in (\ref{eq:fancyp})--(\ref{eq:sab}). Similarly Theorem \ref{th:automatica} holds for a suitable choice of the Lagrangian (details will not be given, but see below).

What is the situation for the control problem in Section \ref{sec:control}? Recall that the corresponding system of {\em differential equations}  is given by (\ref{eq:odecontrol}), (\ref{eq:controladjoint}), where, in the right-hand sides, $u$ has been expressed as $u=\Phi(x,\lambda,t)$. That system of differential equations does {\em not} possess the format (\ref{eq:especial}) for which (\ref{eq:fancyp}) makes sense and, accordingly, we cannot provide analogues to Theorems \ref{th:main} and \ref{th:main2}.

 In order to gain additional insight, let us use the direct approach based on Runge's second order integrator (\ref{eq:runge}).
We define the Lagrangian (compare with (\ref{eq:hat}) and note consistency with (\ref{eq:friday}) due to the factor $h_n^2$):
\begin{eqnarray*}
&& {\cal C}(x_N) - \lambda_0\T(x_0-\alpha)-\sum_{n=0}^{N-1} h_n\lambda_{n+1}\T \Big[\frac{1}{h_n}(x_{n+1} - x_n) -  {k}_{n,1}\Big]\\
&&\qquad\qquad {} - \sum_{n=0}^{N-1} h_n \Lambda_{n}\T\Big[ k_{n,1} -f(X_{n,1}, U_{n,1},t_n+h_n/2)\Big]\\
&&\qquad\qquad{} -  \sum_{n=0}^{N-1} h_n^2 \mu_n\T \Big[k_{n,2}-f(X_{n,2}, U_{n,2},t_n)\Big],
\end{eqnarray*}
where, as on other occasions, the stages $X_{n,1}=x_n+(h_n/2) k_{n,2}$, $X_{n,2}=x_n$ must be seen as known functions of $x_n$ and $k_{n,2}$.
 Taking gradients with respect to $x_n$, $k_{n,1}$, $k_{n,2}$ leads to the necessary conditions
 \begin{eqnarray*}
 \lambda_{n+1} &=& \lambda_n - (\partial_x f(X_{n,1},U_{n,1},t_n+h_n/2))\T \Lambda_{n} \\&& \qquad\qquad\qquad{}-h_n^2 (\partial_x f(X_{n,2},U_{n,2},t_n))\T\mu_n,\\
 \Lambda_{n} &=&\lambda_{n+1}, \\
 \mu_n &=& \frac{1}{2} (\partial_x f(X_{n,1},U_{n,1},t_n+h_n/2))\T\Lambda_n;
 \end{eqnarray*}
 which clearly correspond to the integrator (\ref{eq:rungep}). (By considering the case where $f$ is independent of $u$, this shows that Theorem \ref{th:automatica} holds in this case.) However, taking gradients with respect to $U_{n,1}$ and $U_{n,2}$ yields
 $$
 (\partial_u f(X_{n,1},U_{n,1},t_n+h_n/2))\T \Lambda_n=0,\qquad  (\partial_u f(X_{n,2},U_{n,2},t_n))\T\mu_n=0.
 $$
The second equation is totally meaningless. It cannot be seen as a discretisation of (\ref{eq:constraints}) because $\mu_n$ is not an approximation to the costate $\lambda$; it does not even possess the right dimensions for that to happen. The values of $U_{n,2}$ retrieved from this constraint will have no relation to the true optimal controls. The paper  \cite{hager} nicely illustrates this with an example (see also \cite{hager2}).

Since the trouble arises by the presence of the controls, things may be fixed by tampering with  $U_{n,2}$, as pointed out in \cite{hager}, \cite{hager2}. However, there is no shortage of RK schemes with nonzero (or even positive) weights, so that, in practice, resorting to such fixes seems ill advised.
\end{document}